\documentclass{amsart}
\usepackage{amsmath,amsthm}
\usepackage{amsfonts,amssymb}
\usepackage{accents}
\usepackage{enumerate}
\usepackage{accents,color}
\usepackage{graphicx}
\usepackage{comment}

\newcommand{\lvt}{\left|\kern-1.35pt\left|\kern-1.3pt\left|}
\newcommand{\rvt}{\right|\kern-1.3pt\right|\kern-1.35pt\right|}

\newtheorem{thm}{Theorem}[section]

\newtheorem{lem}[thm]{Lemma}
\newtheorem{prop}[thm]{Proposition}

\theoremstyle{remark}

 \def\la{{\langle}}
 \def\ra{{\rangle}}

 \def\d{\mathrm{d}}
 \def\e{\mathrm{e}}

 \def\sph{{\mathbb{S}^{d-1}}}

 \def\sJ{{\mathsf J}}
 
 \def\sL{{\mathsf L}}
 
 \def\sP{{\mathsf P}}

 \def\sW{{\mathsf W}}

 \def\fB{{\mathfrak B}}
 \def\fD{{\mathfrak D}}

 \def\fR{{\mathfrak R}}

 \def\a{{\alpha}}
 \def\b{{\beta}}
 \def\g{{\gamma}}
 
 \def\t{{\theta}}
 \def\l{{\lambda}}
 
 \def\s{\sigma}
 \def\la{{\langle}}
 \def\ra{{\rangle}}

 \def\bb{{\mathbf b}}

 \def\kb{{\mathbf k}}

 \def\xb{{\mathbf x}}

 \def\Jb{{\mathbf J}}
 
 \def\Lb{{\mathbf L}}
 \def\Pb{{\mathbf P}}
 
 \def\Wb{{\mathbf W}}

 \def\CD{{\mathcal D}}
 
 \def\CH{{\mathcal H}}

 \def\CV{{\mathcal V}}

 \def\BB{{\mathbb B}}

 \def\NN{{\mathbb N}}

 \def\RR{{\mathbb R}}
 \def\SS{{\mathbb S}}
 \def\UU{{\mathbb U}}
 \def\VV{{\mathbb V}}

      \def\proj{\operatorname{proj}}

\def\lla{\langle{\kern-2.5pt}\langle}      
\def\rra{\rangle{\kern-2.5pt}\rangle}

\def\bk{{\boldsymbol{\kappa}}}

\newcommand{\wh}{\widehat}

\def\f{\frac}

\graphicspath{{./}}
\begin{document}
 
\title{Bernstein Inequality on Parabolic Domains}
\author{Yuan~Xu}
\address{Department of Mathematics, University of Oregon, Eugene,
OR 97403--1222, USA}
\email{yuan@uoregon.edu}
\thanks{The author was partially supported by Simons Foundation Grant \#849676}
\date{\today}
\subjclass[2010]{33C45, 42C05, 42C10}
\keywords{Weighted, Bernstein inequality, $h$-harmonics, $L^p$, unit ball}
\keywords{}

\begin{abstract}
Several families of sharp Bernstein inequalities are established on the weighted $L^2$ space over 
parabolic domains, which include bounded or unbounded rotational paraboloids and parabolic surfaces. 
The main tool is a second-order differential operator satisfied by a specific basis of orthogonal polynomials 
in weighted $L^2$ space. 
\end{abstract}

\maketitle

\section{Introduction}
\setcounter{equation}{0}
 
Bernstein inequalities on several classical domains have been revisited in recent studies, due to a 
new revelation of the existence of stronger inequalities than those in the literature. The phenomenon 
was first observed in \cite{X23b} for the triangle, as a byproduct of a study 
on the rotational cone in $\RR^{d+1}$, which motivated a follow-up study \cite{GX} that established sharp 
Bernstein inequalities on the simplex in $\RR^d$ that are stronger than long accepted inequalities (cf. \cite{Dit})
for this classical domain. Even more, stronger inequalities turn out to exist  \cite{BX} even for the unit ball in 
$\RR^d$, one of the most studied domains and a prototype for other rotational domains. 

There are two types of results in these studies, both based on properties of orthogonal polynomials defined 
via a class of weight functions on the given domain. The first type consists of Bernstein inequalities in
weighted $L^p$ norm that hold for all doubling weights, and their proofs are based on highly localized
kernels for a particular weight function. The existence of latter kernels is established via the {\it addition 
formula}, which is the closed-form formula for the reproducing kernel of the orthogonal projection operator, 
mimicking the addition formula for spherical harmonics, that exists only on a few regular domains. The 
second type consists of sharp Bernstein inequalities in the $L^2$ norm, which is sharp in the sense that 
an inequality becomes an equality for certain extremal polynomials. Most of the latter results can be established 
via the {\it spectral operator}, which is a second-order linear derivation operator that has orthogonal 
polynomials as eigenfunctions, with the eigenvalues depending only on the total degree of orthogonal 
polynomials. For the simplex and the unit ball, new sharp Bernstein inequalities arise from new decompositions 
of the classical spectral operators on these domains. 

Both the addition formula and the spectral operator provide powerful tools for analysis, but they exist 
only on a few regular domains. For $d =2$, the existence of the spectral operator was characterized in
\cite{KrSh}. 
For $d > 3$, no classification is known. While the spectral operators for the unit ball and the simplex are classical, 
the one for the rotational cone was discovered only recently \cite{X20}. 

The purpose of the present paper is to establish sharp Bernstein inequalities on rotational parabolic 
domains, which consist of solid paraboloids in $\RR^{d+1}$, defined by  
 $$
    \UU^{d+1} = \left \{(x,t): \|x\| \le \sqrt{t}, \quad 0 \le t \le b, \quad x \in \RR^d\right \},
 $$
and parabolic surfaces $\UU_0^{d+1}$ of these paraboloids, where $b$ could be infinity. Orthogonal
polynomials for $\UU^2$, bounded by a parabola and a line when $b=1$, are semi-classical and 
studied already in \cite{K75}. The orthogonal structure for $d \ge 2$ has been explored more recently 
\cite{X23a}. While an explicit basis of orthogonal polynomials can be given explicitly for two 
classes of weight functions, called the Jacobi and the Laguerre polynomials, they do not 
satisfy a full addition formula, nor do they possess a spectral operator.  The lack of tools explains why 
analysis in this domain is far less developed; see \cite{X23a} for what is known. Although there is no 
spectral operator, the specific orthogonal basis, the Jacobi and the Laguerre polynomials on parabolic 
domains, satisfy a second-order linear differential operator, but with the eigenvalues depending on 
one more index, rather than only on the degree of the polynomials. It turns out that this operator can 
be used to establish sharp Bernstein inequalities in the $L^2$ norm, following more or less the same 
paradigm based on the spectral operator. This leads to the main results of this work. 
 
Our results are restricted only to inequalities in the $L^2$ norm, and they are the first ones established 
on the parabolic domains and new even in the parabolic domain in the plane. At the moment, we are
not aware of an effective tool for establishing inequalities in the $L^p$ norm, for $p \ne 2$, on parabolic 
domains. These $L^2$ inequalities, especially the additional weight functions that accompany the derivatives, 
provide a possible guidance for the formulation of inequalities in the $L^p$ norm.   
 
The paper is organized as follows. In the next section, we discuss how to deduce a sharp Bernstein
inequality in the $L^2$ norm from the spectral operator, and illustrate the approach by reviewing the 
results on the unit ball, which also serve as a preliminary for the latter sections. The sharp Bernstein
inequalities on the paraboloids are established in Section 3, and those on the parabolic surfaces are
established in Section 4.    

\section{Preliminary: sharp $L^2$ Bernstein inequalities}
\setcounter{equation}{0}

Let $\Omega$ be a domain in $\RR^d$ and let $W$ be a non-negative weight function defined on $\Omega$ so that
$$
  \la f, g \ra_{\Omega, W} = \int_\Omega f(x) g (x) W(x) \d x
$$
is a well-defined inner product on $L^2(\Omega, W)$. Let $\Pi_n(\Omega)$ be the space of polynomials, restricted
on $\Omega$, of degree at most $n$ in $d$ variables. For each $n \in \NN_0$, let $\CV_n(\Omega, W)$ denote the 
subspace of $\Pi_n(\Omega)$ that consists of orthogonal polynomials of degree $n$ with respect to the inner product
$\la \cdot,\cdot\ra_{\Omega,W}$. Let $\proj_n: L^2(\Omega,W) \mapsto \CV_n(\Omega, W)$ be the orthogonal 
projection operator. If $\{P_j^n: 1 \le j \le d(n) \}$ is an orthogonal basis of $\CV_n(\Omega, W)$, where 
$d(n):= \dim \CV_n(\Omega, W)$, then the Fourier orthogonal expansion of $f \in L^2(\Omega,W)$ is given by 
$$
  f = \sum_{n=0}^\infty \proj_n f, \quad \quad \hbox{where} \quad 
       \proj_n f = \sum_{j=1}^{d(n)} \frac{\la f, P_j^n\ra_{\Omega, W}}{ \la P_j^n, P_j^n\ra_{\Omega, W}} P_j^n.  
$$

A second-order linear derivation operator $\fD$ is called a spectral operator of $L^2(\Omega, W)$ if there exist 
nonnegative numbers $\l(n)$ such that 
\begin{equation} \label{eq:fD}
 \fD P = - \l(n) P, \qquad \forall P \in \CV_n(\Omega, W), \qquad n =0,1,2, \ldots.
\end{equation}
That is, $\fD$ has eigenvalue $\l(n)$ with eigensapce $\CV_n(\Omega, W)$ for all $n\in \NN_0$. 
The spectral operator exists only for rather special domains $\Omega$ and/or the weight $W$. When it exists, 
it is self-adjoint in $L^2(\Omega, W)$ by orthogonality and \eqref{eq:fD}. Furthermore,
the following lemma holds. 

\begin{lem}\label{lem:main}
Assume the spectral operator $\fD$ exists. For $f\in L^2(\Omega, W)$, 
$$
   - \int_\Omega \fD f(x) \cdot f(x) W(x) \d x = \sum_{m= 0}^\infty \lambda(m) \int_\Omega \left|\proj_m f(x)\right|^2 W(x) \d x. 
$$
In particular, if $f \in \Pi_n(\Omega)$, then 
$$
   \left| \int_\Omega \fD f(x) \cdot f(x) W(x) \d x \right| \le \max_{0 \le m \le n} \lambda(m) \int_\Omega |f(x)|^2 W(x) \d x.
$$
\end{lem}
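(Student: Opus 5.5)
The plan is to expand $f$ in its Fourier orthogonal series and use that each $\proj_m f$ is an eigenfunction of $\fD$. Write $f = \sum_{m\ge 0} \proj_m f$. By \eqref{eq:fD}, $\fD \proj_m f = -\l(m)\proj_m f$, so formally
$$
 \fD f = -\sum_{m=0}^\infty \l(m) \proj_m f .
$$
Pairing with $f = \sum_k \proj_k f$ in $\la\cdot,\cdot\ra_{\Omega,W}$, the cross terms with $k\ne m$ vanish by orthogonality of $\CV_k(\Omega,W)$ and $\CV_m(\Omega,W)$. What remains is $-\sum_m \l(m)\la \proj_m f,\proj_m f\ra_{\Omega,W}$, which is the stated identity after a change of sign.

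To justify the interchange of $\fD$ with the infinite sum, I will not apply $\fD$ termwise to the series. Instead I use self-adjointness, noted just before the lemma. For each $m$, the coefficient satisfies
$$
\la \fD f, \proj_m f\ra_{\Omega,W} = \la f, \fD\proj_m f\ra_{\Omega,W} = -\l(m)\la f,\proj_m f\ra_{\Omega,W} = -\l(m)\|\proj_m f\|^2 .
$$
Thus $\proj_m(\fD f) = -\l(m)\proj_m f$. Parseval's identity then gives
$$
\la \fD f, f\ra_{\Omega,W} = \sum_m \la \proj_m \fD f, \proj_m f\ra_{\Omega,W} = -\sum_m \l(m)\|\proj_m f\|^2 .
$$

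\textbf{Main obstacle.} The only delicate point is this analytic step. The identity should be read for $f$ in the domain where $\fD f\in L^2(\Omega,W)$ and the integration-by-parts form of self-adjointness holds, for instance for $f$ smooth up to the boundary. For general $f\in L^2$, the right-hand side (possibly infinite) serves as the definition of the left-hand side. For the polynomial case, which is the one actually used, there is no issue at all: everything is a finite sum.

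\textbf{Polynomial case.} If $f\in\Pi_n(\Omega)$, then $\proj_m f = 0$ for $m>n$, and $\fD f = -\sum_{m=0}^n \l(m)\proj_m f$ holds exactly by linearity. Since $\l(m)\ge 0$, the identity yields
$$
0\le -\la \fD f,f\ra_{\Omega,W} = \sum_{m=0}^n \l(m)\|\proj_m f\|^2 \le \max_{0\le m\le n}\l(m)\sum_{m=0}^n\|\proj_m f\|^2 .
$$
By Parseval, the last sum equals $\int_\Omega |f|^2 W\,\d x$, which is the claimed inequality.
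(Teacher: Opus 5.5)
Your proposal is correct and follows the paper's argument: the eigenvalue relation $\fD \proj_m f = -\l(m)\proj_m f$ together with orthogonality kills the cross terms, and Parseval (for $f\in\Pi_n(\Omega)$, a finite orthogonal sum) gives the bound, with $\l(m)\ge 0$ letting you drop the absolute value. Your self-adjointness justification for general $f$ goes beyond what the paper writes. One small point there: concluding $\proj_m(\fD f) = -\l(m)\proj_m f$ requires pairing $\fD f$ with every $P\in\CV_m(\Omega,W)$, not only $P=\proj_m f$, although the single pairing you computed already suffices for your final Parseval computation.
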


The identity in the lemma follows from orthogonality since $\proj_m f \in \CV_m(\Omega, W)$ so that 
$\fD \proj_m f  = - \lambda(m)\proj_m f$, whereas the inequality follows from the Parseval identity of
the Fourier orthogonal series in $L^2(\Omega, W)$. The inequality, simple as it looks, is one of the main 
ingredients for deriving the sharp Bernstein inequality in $L^2$-norm. The other ingredient is the explicit 
self-adjoint form of the operator $\fD$. 

We illustrate how the latter works by working with an example: the unit ball $\BB^d$ of $\RR^d$ 
equipped with the classical weight function 
$$
  W_\BB^\mu(x) = (1-\|x\|^2)^{\mu-\f12}, \qquad \mu > -\tfrac12.   
$$
Several orthogonal bases for $\CV_n(\BB^d,W_\BB^\mu)$ can be given explicitly. One of them is 
given in terms of the product of the Gegenbauer polynomials by parametrizing $\BB^d$ 
in the Cartesian coordinates. For $x = (x_1, \ldots, x_d) \in \mathbb{R}^d$, define 
by $\xb_j$ a truncation of $x$, namely $\mathbf{x}_0 = 0$ and $\mathbf{x}_j = (x_1, \ldots, x_j)$, $1 \le j \le d$. 
Associated with $\kb = (k_1,\ldots, k_d)$, define $\kb^j := (k_j, \ldots, k_d)$, $1 \le j \le d$, and $\kb^{d+1} := 0$.
Then for $\kb \in \mathbb{N}_0^d$ with $|\kb| = k_1+\ldots + k_d =n$, define polynomials $\Pb_\kb^n$ by 
\begin{align}\label{secondb}
\Pb_\kb^n(x)  = \prod_{j=1}^d (1 - \| \mathbf{x}_{j-1} \|^2)^{\kb_j / 2}  C_{\kb_j}^{\lambda_j}\!\bigg(
      \frac{x_j}{\sqrt{1 - \| \mathbf{x}_{j-1} \|^2}} \bigg),
\end{align}
where $\lambda_j = \mu + |\kb^{j+1}| + \frac{d - j+1}{2}$. Then $\{\Pb_\kb^n: |\kb| = n\}$ is an orthogonal basis 
of $\mathcal{V}_n(W_\mu, \BB^d)$ \cite[Proposition 5.2.2]{DX}. Another orthogonal basis can be given in terms 
of the Jacobi polynomials, with $2\|x\|^2-1$ as argument, and spherical harmonics \cite[(5.2.4)]{DX} by 
parametrizing $\BB^d$ in spherical polar coordinates. 

The spectral operator $\fD_\BB^\mu$ for $L^2(\BB^d, W_\BB^\mu)$ is given by \cite[(5.2.3)]{DX}
\begin{equation} \label{eq:fD_Bd}
  \fD_\BB^\mu:= \Delta - \la x, \nabla \ra^2 - (2\mu + d-1) \la x, \nabla \ra 
\end{equation}
which satisfies 
\begin{equation} \label{eq:eigenB}
  \fD^\mu_\BB P = - n (n+ 2 \mu + d) P, \qquad \forall P \in \CV_n(\BB^d, W_\BB^\mu). 
\end{equation}
To derive sharp Bernstein inequalities, we need to rewrite $\fD_\BB^\mu$ in different forms. There are two
such forms. The first one is classical (cf. \cite[Proposition 7.1]{DaiX1}) 
\begin{align}\label{fDBd2}
  \fD^{\mu}_\BB =  \frac{1}{W_\BB^\mu(x)}\sum_{i=1}^{d} \partial_{i}\,\!\big(W_\BB^{\mu+1}(x)\,\partial_{i}\big) 
    + \fD_\SS,
\end{align}
where $\fD_\SS$ is the spherical part of the Laplace operator, and its restriction on the unit sphere is the 
Laplace-Beltrami operator $\Delta_0$, which furthermore satisfies \cite[(1.8.3)]{DaiX}
\begin{align}\label{fD_SS}
\fD_\SS = \sum_{1 \le i < j \le d} D_{i,j}^2, \qquad \hbox{where} \quad  D_{i,j}: = x_i \partial_j - x_j \partial_i,
\end{align}
where the operators $D_{i,j}$ are angular derivatives on the sphere \cite[(1.8.1)]{DaiX} since if 
$(x_i,x_j) = r_{i,j}(\cos \t_{i,j}, \sin \t_{i,j})$, then $D_{i,j} = \frac{\partial}{\partial \t_{i,j}}$, and they 
are self-adjoint operators of $L^2(\sph)$. It follows, in particular, that \cite[Section 1.8]{DaiX}
\begin{equation} \label{eq:intSS}
  \int_{\sph} \Delta_0 f(\xi) g (\xi) \d \s_\SS(\xi) = -  \sum_{1\le i< j \le d} \int_{\sph} D_{i,j} f(\xi) D_{i,j} g(\xi) \d \s_\SS (\xi).
\end{equation}

One immediate consequence of \eqref{fDBd2} and \eqref{eq:intSS} is the following integral identity 
derived via integration by parts, 
\begin{align*}
  - \int_{\BB^d} \fD^\mu_\BB f(x) g(x) W_\BB^\mu(x) \d x = & \sum_{i=1}^{d} \int_{\BB^d} (1-\|x\|^2) \partial_{i}f(x) 
   \partial_{i}g(x) W_\BB^\mu (x) \d  \\ & + \sum_{1 \le i< j \le d} \int_{\BB^d} D_{i,j} f(x) D_{i,j} g(x)W_\BB^\mu(x) \d x,        
\end{align*}
which shows that $\fD_\BB^\mu$ is self-adjoint. Moreover, it leads to, setting $g = f$ and using Lemma \ref{lem:main},
a set of Bernstein inequalities in $L^2$ norm (\cite{K2022} and \cite{BX}).  Let $\|\cdot\|_{\mu,2}$ denote the norm
of $L^2(\BB^d, W_\BB^\mu)$. 
 
\begin{thm} \label{thm:B-ineq0Ball}
Let $d \ge 2$, $\mu > -\f12$, $n = 0,1,2,\ldots$ and $f \in \Pi_n^d$. Then
\begin{equation} \label{eq:B-ineq0ball}
 \sum_{i=1}^d \left \| \sqrt{1-\|x\|^2} \partial_i f \right \|_{\mu,2}^2 + 
  \sum_{1\le i<j\le d} \left \|D_{i,j} f \right \|_{\mu,2}^2 \le n(n+2\mu+d) \|f \|_{\mu,2}^2
\end{equation}
and the equality holds if and only if $f \in \CV_n(\BB^d, W_\BB^\mu)$. 
Furthermore, the following two inequalities are also sharp,
\begin{align}
 \sum_{i=1}^d \left \| \sqrt{1-\|x\|^2} \partial_i f \right \|_{\mu,2}^2 \le n(n+2\mu+d) \|f \|_{\mu,2}^2, \quad \text{if $n$ is even}
  \label{eq:B-ineq0aball} \\ 
 \sum_{i=1}^d \left \| \sqrt{1-\|x\|^2} \partial_i f \right \|_{\mu,2}^2 \le (n(n+2\mu+d) -d+1) \|f \|_{\mu,2}^2, \quad \text{if $n$ is odd}.
  \label{eq:B-ineq0aOdd}
\end{align}
\end{thm}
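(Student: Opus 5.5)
The plan is to combine Lemma~\ref{lem:main} with the integration-by-parts identity displayed just before the theorem, taking $g=f$, and then to refine the count using a basis that diagonalizes both $\fD_\BB^\mu$ and $\fD_\SS$.

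\textbf{Step 1: inequality \eqref{eq:B-ineq0ball}.} With $g=f$ the identity reads
$$
-\int_{\BB^d}\fD_\BB^\mu f\cdot f\, W_\BB^\mu \,\d x = \sum_{i}\left\|\sqrt{1-\|x\|^2}\,\partial_i f\right\|_{\mu,2}^2+\sum_{i<j}\|D_{i,j}f\|_{\mu,2}^2 .
$$
By Lemma~\ref{lem:main} and \eqref{eq:eigenB}, the left side equals $\sum_{m=0}^n \lambda(m)\|\proj_m f\|_{\mu,2}^2$, where $\lambda(m)=m(m+2\mu+d)$. Since $\mu>-\f12$ and $d\ge 2$, $\lambda$ is strictly increasing in $m$. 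The Parseval identity then gives the bound $\lambda(n)\|f\|_{\mu,2}^2$. Equality holds iff $\proj_m f=0$ for all $m<n$, that is, iff $f\in\CV_n(\BB^d,W_\BB^\mu)$.

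\textbf{Step 2: set up a common eigenbasis.} Dropping the nonnegative $D_{i,j}$ terms already gives \eqref{eq:B-ineq0aball} for every $n$. To prove sharpness, and to obtain the improved odd case, I use the polar basis \cite[(5.2.4)]{DX}:
$$
P^n_{j,\ell}(x)=P_j^{(\mu-\f12,\,n-2j+\f d2-1)}(2\|x\|^2-1)\,Y_\ell^{n-2j}(x),
$$
where $Y_\ell^{n-2j}$ runs over an orthonormal basis of solid spherical harmonics of degree $n-2j$. The operator $\fD_\SS$ acts only on the angular variable and annihilates radial functions. Therefore $\fD_\SS P^n_{j,\ell}=-k(k+d-2)P^n_{j,\ell}$ with $k=n-2j$.

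Expand $f=\sum c_{m,j,\ell}P^m_{j,\ell}$. Because $\fD_\SS$ is self-adjoint in $L^2(\BB^d,W_\BB^\mu)$ (integrate in polar coordinates and use \eqref{eq:intSS}) and is diagonal in this orthogonal basis, we get
$$
\sum_i\left\|\sqrt{1-\|x\|^2}\,\partial_i f\right\|_{\mu,2}^2=\sum_{m,j,\ell}\bigl[\lambda(m)-k(k+d-2)\bigr]|c_{m,j,\ell}|^2\|P^m_{j,\ell}\|^2 ,\qquad k=m-2j .
$$
Each coefficient $\lambda(m)-k(k+d-2)$ is at most $\lambda(m)$.

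\textbf{Step 3: the even case.} For even $n$ the bound $\lambda(n)\|f\|_{\mu,2}^2$ is attained by the radial polynomial $f=P^n_{n/2,0}$, for which $k=0$.

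\textbf{Step 4: the odd case.} Let $n$ be odd. Terms with $m=n$ have $k\ge1$ by parity, so their coefficient is at most $\lambda(n)-(d-1)$. Terms with $m\le n-1$ have coefficient at most $\lambda(n-1)$. Since
$$
\lambda(n)-\lambda(n-1)=2n+2\mu+d-1\ge d-1
$$
for $n\ge1$ and $\mu>-\f12$, these terms are also bounded by $\lambda(n)-d+1$. This proves \eqref{eq:B-ineq0aOdd}. Equality is attained by $f=P^n_{(n-1)/2,\ell}$, for which $k=1$.

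\textbf{Main obstacle.} The delicate point is justifying that the $\fD_\SS$-quadratic form diagonalizes in the polar basis with respect to the weighted ball measure. This requires confirming that $\fD_\SS$ commutes with multiplication by radial functions and that cross terms between different harmonic degrees vanish. Both follow from writing $\d x=r^{d-1}\d r\,\d\s_\SS$ and using orthogonality of spherical harmonics of different degrees.
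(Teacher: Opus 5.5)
Your proof is correct and follows the paper's route: the integration-by-parts identity with $g=f$ combined with Lemma~\ref{lem:main}, with extremal polynomials taken from the polar basis. In addition, your simultaneous diagonalization of $\fD_\BB^\mu$ and $\fD_\SS$ in that basis actually proves the odd-$n$ bound \eqref{eq:B-ineq0aOdd}, which the paper does not derive but only quotes from \cite{K2022,BX}.

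One caveat is inherited from the paper rather than from your argument. Applying \eqref{eq:fD_Bd} to $x_1\in\CV_1(\BB^d,W_\BB^\mu)$ gives $\fD_\BB^\mu x_1=-(2\mu+d)x_1$, so the eigenvalue in \eqref{eq:eigenB} is really $n(n+2\mu+d-1)$. The constants should therefore read $n(n+2\mu+d-1)$ and $n(n+2\mu+d-1)-d+1$. Your argument survives this correction essentially unchanged; for instance, $\lambda(n)-\lambda(n-1)=2n+2\mu+d-2$ still exceeds $d-1$. With the constants as printed, the equality and sharpness claims fail for $n\ge 1$.
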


The sharpness of these inequalities is shown by choosing $f$ as a specific polynomial in $\CV_n(\BB^d, W_\BB^\mu)$.
For example, to show \eqref{eq:B-ineq0aball} is sharp, we can choose $f$ as 
$$
   f(x) = P_m^{(\mu,\frac{d-1}{2})} \left(2\|x\|^2-1\right), \qquad \hbox{$n = 2m$},
$$
where $P_m^{(a,b)}$ is the Jacobi polynomials, which is a rotational invariant polynomial of degree $n$ in 
$\CV_n(\BB^d,W_\BB^\mu)$.

The second decomposition of the spectral operator $\fD_\mu^\BB$ appears more recently in \cite{BX}, which states that 
\begin{align} \label{fDBd3}
\fD^\mu_\BB = \frac{1}{W_\mu(x)} \left[ \frac{1}{\|x\|^d} \la x ,\nabla\ra \left(\|x\|^{d-2}(1-\|x\|^2) W_\mu(x)  \la x ,\nabla\ra \right)    \right] + \frac{1}{\|x\|^2} \CD_\SS.
\end{align}
It implies, in particular, another integral identity that shows $\fD_\BB^\mu$ is self-adjoint, 
\begin{align*}
  -\int_{\BB^d} \fD_\BB^\mu f(x) \cdot g(x) W_\BB^\mu(x) \d x 
     \,= &  \int_{\BB^d} \la x ,\nabla \ra f(x) \cdot \la x ,\nabla \ra g(x) (1-\|x\|^2)W_\BB^\mu(x) \frac{\d x}{\|x\|^2} \\
      &  +  \sum_{1\le i<j\le d} \int_{\mathbb{B}^d} D_{i,j} f(x)  D_{i,j} g(x) W_\BB^\mu(x) \frac{\d x}{\|x\|^2}. \notag
\end{align*}
Setting $g = f \in \Pi_n^d$ and using Lemma \ref{lem:main}, the above identity yields another set of sharp 
Bernstein inequalities \cite{BX}. 
 
\begin{thm} \label{thm:BInequality_new}
Let $d \ge 2$, $n = 0,1,2,\ldots$ and $f \in \Pi_n^d$. Then
\begin{align} \label{eq:BB-ineq1}
     \left \|\frac{\sqrt{1-\|x\|^2}}{\|x\|}  \la x ,\nabla \ra f\right \|_{\mu,2}^2
       +  \sum_{1\le i<j\le d} \left \| \frac{1}{\|x\|} D_{i,j} f \right\|_{\mu,2}^2 \le 
            n(n+2\mu+d) \|f\|_{\mu,2}^2,
\end{align}
and the equality holds if and only if $f \in \CV_n(W_\mu, \BB^d)$. Furthermore, the following two inequalities are also sharp,
\begin{align} 
\left \|\frac{\sqrt{1-\|x\|^2}}{\|x\|}  \la x ,\nabla \ra f\right \|_{\mu,2} \le \sqrt{n(n+2\mu+d)}  \|f\|_{\bk,2}, \quad \text{if $n$ is even} \label{eq:BB-ineq1a} \\
\left \|\frac{\sqrt{1-\|x\|^2}}{\|x\|}  \la x ,\nabla \ra f\right \|_{\mu,2} \le \sqrt{n(n+2\mu+d) -d+1}  \|f\|_{\bk,2} \quad \text{if $n$ is odd}. \label{eq:BB-ineq1b}
\end{align}
\end{thm}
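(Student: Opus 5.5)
The plan is to combine the integral identity coming from the decomposition \eqref{fDBd3} with Lemma \ref{lem:main}.

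\emph{Main inequality.} Take $g=f\in\Pi_n^d$ in the identity following \eqref{fDBd3}. The left-hand side of \eqref{eq:BB-ineq1} then equals $-\int_{\BB^d}\fD^\mu_\BB f\cdot f\,W_\BB^\mu\,\d x$. The integrals are finite for $d\ge 2$: $\la x,\nabla\ra f/\|x\|$ and $D_{i,j}f/\|x\|$ are bounded, since both $\la x,\nabla\ra f$ and $D_{i,j}f$ vanish at the origin to first order. By Lemma \ref{lem:main} and \eqref{eq:eigenB}, this quantity equals
$$
\sum_{m=0}^n m(m+2\mu+d)\,\|\proj_m f\|_{\mu,2}^2 .
$$
Since $\l(m)=m(m+2\mu+d)$ is strictly increasing in $m$, the sum is at most $n(n+2\mu+d)\|f\|_{\mu,2}^2$ by Parseval. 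Equality holds if and only if $\proj_m f=0$ for every $m<n$, that is, if and only if $f\in\CV_n$.

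I should double-check the integration by parts behind \eqref{fDBd3}. The radial term gives no boundary contribution at $\|x\|=1$, because of the factor $1-\|x\|^2$. At the origin it gives none either: in polar coordinates $x=r\xi$ we have $\la x,\nabla\ra = r\partial_r$, the measure is $r^{d-1}\,\d r\,\d\s$, and the weight $r^{d-2}$ makes the boundary term vanish. The angular term is handled by \eqref{eq:intSS} on each sphere of radius $r$. I will take this identity as given, since it is stated earlier in the paper.

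\emph{Sharp reduced inequalities.} Dropping the nonnegative $D_{i,j}$ terms gives the bound $n(n+2\mu+d)$ immediately. The work is in showing sharpness, and in obtaining the improved constant for odd $n$.

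For even $n=2m$, take $f(x)=P_m^{(\mu,\frac{d-2}{2})}(2\|x\|^2-1)$. With $d\ge2$ the correct parameter should be $\frac{d-2}{2}$, matching the polar basis of \cite[(5.2.4)]{DX}. This $f$ is radial and lies in $\CV_n$, and radiality gives $D_{i,j}f=0$. Hence equality holds in \eqref{eq:BB-ineq1a}.

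For odd $n$, I would use the polar orthogonal basis $P_j^{(\mu,\,k+\frac{d-2}{2})}(2\|x\|^2-1)\,Y_k(x)$ with $n=2j+k$, where $Y_k$ is a solid spherical harmonic of degree $k$. Three ingredients make the estimate work:
\begin{enumerate}[(i)]
\item $\la x,\nabla\ra$ preserves the spherical harmonic degree $k$;
\item the $D_{i,j}$ act only on $Y_k$;
\item the angular term $\sum_{i<j}\|D_{i,j}F\|^2$ equals $k(k+d-2)$ times the radial integral of $|F|^2$ with weight $r^{-2}$.
\end{enumerate}

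Now decompose $f=\sum_k f_k$ by spherical harmonic degree. The pieces are mutually orthogonal in every radial weight, and both quadratic forms split over $k$. For each $k$, the identity restricted to the $k$-th piece gives
$$
\Bigl\|\tfrac{\sqrt{1-\|x\|^2}}{\|x\|}\la x,\nabla\ra f_k\Bigr\|^2=\sum_m \l(m)\,\|\proj_m f_k\|^2-k(k+d-2)\,\bigl\|f_k/\|x\|\bigr\|^2 .
$$
The key point is that $\|f_k/\|x\|\|^2\ge\|f_k\|^2$ on the ball. When $n$ is odd, only $k\ge1$ occurs in degree-$n$ components of parity matching $n$. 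I expect the main obstacle here: lower-degree components $m<n$ could have $k=0$, so I must bound $\l(m)$ against $\l(n)-(d-1)$ for those $m$. Since $m\le n-1$, we have $\l(m)\le\l(n-1)=\l(n)-(2n+2\mu+d-1)$, which is at most $\l(n)-(d-1)$ because $2n+2\mu\ge 2-1>0$. For the components with $k\ge1$, use $k(k+d-2)\|f_k/\|x\|\|^2\ge(d-1)\|f_k\|^2$. Summing over $k$ gives \eqref{eq:BB-ineq1b}.

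For sharpness when $n$ is odd, take $f=x_1P_{(n-1)/2}^{(\mu,\frac d2)}(2\|x\|^2-1)$. The inequality $\|f/\|x\|\|\ge\|f\|$ is then strict, so sharpness needs more care. I would instead verify directly, via the one-variable Jacobi computation in $r$, that this $f$ attains the constant. If it does not, I would reinterpret sharpness as asymptotic sharpness, following \cite{BX}.
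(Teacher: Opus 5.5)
Your derivation of \eqref{eq:BB-ineq1} and its equality case is exactly the paper's argument: set $g=f$ in the identity after \eqref{fDBd3} and apply Lemma \ref{lem:main}. Your splitting $f=\sum_k f_k$ by spherical-harmonic degree is a correct proof of \eqref{eq:BB-ineq1b}, for which the paper itself only cites \cite{BX}.

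The gap is the sharpness of \eqref{eq:BB-ineq1b}, which you leave as ``verify directly, or else reinterpret.'' The direct verification cannot succeed, and your own identities show why. Sum them over $k$, using $\lambda(m)\le\lambda(n-1)$ for the radial piece $f_0\in\Pi_{n-1}$ and $k(k+d-2)\ge d-1$ for $k\ge1$. This gives
\begin{align*}
\Bigl\|\tfrac{\sqrt{1-\|x\|^2}}{\|x\|}\la x,\nabla\ra f\Bigr\|_{\mu,2}^2
 \le {}& \bigl(\lambda(n)-d+1\bigr)\|f\|_{\mu,2}^2-\bigl(\lambda(n)-d+1-\lambda(n-1)\bigr)\|f_0\|_{\mu,2}^2 \\
 & -(d-1)\sum_{k\ge1}\int_{\BB^d}|f_k(x)|^2\Bigl(\frac{1}{\|x\|^2}-1\Bigr)W_\BB^\mu(x)\,\d x .
\end{align*}
Both subtracted terms are nonnegative, and they vanish together only when $f=0$. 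Hence for odd $n$ no nonzero polynomial gives equality in \eqref{eq:BB-ineq1b}; this includes $x_1P_{(n-1)/2}(2\|x\|^2-1)$.

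Already for $n=1$, the best constant in the squared inequality is $\frac{2\mu+1}{d}$, attained at $f=x_1$. What survives is only asymptotic sharpness: the radial element of $\CV_{n-1}$ gives the ratio $\lambda(n-1)=\lambda(n)-O(n)$. So sharpness in the paper's sense (equality for an extremal polynomial) cannot be proved for the odd case. Your write-up should state this explicitly rather than hedge.

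Two smaller points.
\begin{enumerate}[(i)]
\item For even $n=2m$, the radial extremal must be $P_m^{(\mu-\frac12,\frac{d-2}{2})}(2\|x\|^2-1)$, with first parameter $\mu-\frac12$ rather than $\mu$. The reason is that the radial weight in $s=2r^2-1$ is $(1-s)^{\mu-\frac12}(1+s)^{\frac{d-2}{2}}$.
\item As printed, \eqref{eq:eigenB} is inconsistent with \eqref{eq:fD_Bd}. Directly, $\fD_\BB^\mu x_1=-(2\mu+d)x_1$, so $\lambda(n)=n(n+2\mu+d-1)$. With the printed value, your equality statement would be false for $n\ge1$. With the corrected value, all your arguments go through unchanged; the margin needed in the odd case becomes $2n+2\mu-1>0$.
\end{enumerate}
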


As illustrated by these results on the unit ball, our main ingredients are the spectral operators and their self-adjoint forms.
For $d = 1$, both are well-known as they pertain to classical orthogonal polynomials, which are associated with the 
Jacobi weight $(1-t)^\a (1+t)^\b$ on $[-1,1]$, the Laguerre weight $t^\a \e^{-t}$ on $\RR_+$, and the Hermite
weight $\e^{-t^2}$ on $\RR$. For $d =2$, the classification in \cite{KrSh} shows that, up to an affine change of 
variables, there are essentially five classes, three from products of Laguerre and Hermite weights on the product 
domains, one on the unit disk, and one on the triangle. While no classification is known for $d \ge 3$, there are more 
distinct cases for $d \ge 3$. Besides the straightforward extension of the unit ball $\BB^d$ and the standard simplex 
$\triangle^d$ with classical Jacobi weight functions \cite{DX}, spectral operators were recently discovered for the 
rotational conic domains \cite{X21}. They also exist on quadratic surfaces, such as the unit sphere and the conic 
surfaces. Furthermore, the weight functions on these rotationally invariant domains can be extended to the Dunkl 
weight functions that are invariant under the reflection group. 

Besides the unit ball, the sharp $L^2$ Bernstein inequalities for the product Hermite and/or Laguerre weights 
in higher dimensions were established in \cite{KS}, which also fits into the approach outlined above. 
The sharp Bernstein inequalities on the rotational conic domains are established, among other things, 
in \cite{X23b}, which includes new inequalities on the triangle as a special case, and sharp inequalities on 
the simplex are established in \cite{GX}, following the above approach. 

\section{Bernstein inequalities on paraboloids}
\setcounter{equation}{0}

We consider Bernstein inequalities on the paraboloids defined by 
$$
  \UU^{d+1} = \left\{(x,t) \in \RR^{d+1}: \|x\| \le \sqrt{t}, \quad 0 \le t \le b, \, x \in \RR^d \right\},
$$
which is rotationally invariant around the $t$-axis, where $b$ is a positive number, usually 
chosen as $b =1$, or positive infinity. Paramezrising the domain by $x =\sqrt{t} y$, $y\in \BB^d$, it follows readily that 
\begin{equation}\label{intUU=}
  \int_{\UU^{d+1} } f(x,t) \d x \d t  = \int_0^b \int_{\|x\|^2 \le t} f(x,t) \d x \d t =
    \int_0^b t^{\frac{d}{2}} \int_{\BB^d} f\left(\sqrt{t} y, t\right) \d y \d t. 
\end{equation}

For the solid domain, we denote $\Pi^{d+1}:= \Pi(\UU^{d+1})$, which is the usual space of polynomials
of degree at most $n$ in $d+1$ variables. We need to consider the orthogonality of polynomials with 
respect to a weight function on $\UU^{d+1}$, which consists of two cases.  

\subsection{Jacobi polynomials on bounded paraboloid} 
We consider the bounded paraboloid with $b =1$; that is, $0 \le t \le 1$ in the definition of $\UU^{d+1}$.  
For $\g > -1$ and $\mu> -\f12$,  we define a weight function $W_\UU^{\g,\mu}$ on $\UU^{d+1}$,
$$
    \Wb_\UU^{\g,\mu}(x,t) :=  (1-t)^\g (t - \|x\|^2)^{\mu-\f12}, \qquad  (x,t) \in \UU^{d+1} 
$$
and, accordingly, the inner product on the paraboloid defined by 
$$
  \la f,g\ra_\UU^{\g,\mu} = \int_{\UU^{d+1}} f(x,t) g(x,t) \Wb_\UU^{\g,\mu}(x,t) \d x\d t,
$$
For $n =0,1,2,\ldots$, let $\CV_n(\UU^{d+1},\Wb_\UU^{\g,\mu})$ denote the space of orthogonal polynomials 
of degree at most $n$ under this inner product. Then $\dim \CV_n(\VV^{d+1}, \Wb_\UU^{\g,\mu}) = \binom{n+d}{n}$. 
An orthogonal basis of this space is given in \cite{OX, X23a} in terms of the Jacobi polynomials and an 
orthogonal basis on the unit ball. 

Let $\{\Pb_{\kb}^m: |\kb| = m, \, \kb\in \NN_0^d\}$ be an orthogonal basis with parity of $\CV_m(\BB^d,W_\BB^\mu)$,
such as the one given in \eqref{secondb}. For $0 \le m \le n$, define  
\begin{equation} \label{eq:Jbasis_UU}
 \Jb_{m,\kb}^n(x,t) = P_{n-m}^{(m+\mu+\f{d-1}{2},\g)}(1-2 t) t^{\f{m}{2}} 
    \Pb_{\kb}^m \left(\frac{x}{\sqrt{t} }\right), \quad |\kb| = m, \,\, 0\le m \le n.
\end{equation}
Then $\{\Jb_{m,\kb}^n: |\kb | = m, \, 0 \le m  \le n, \, \kb \in \NN_0^d\}$ is an orthogonal basis of 
$\CV_n(\UU^{d+1}, \Wb_\UU^{\g,\mu})$. 
We call $\Jb_{m,\kb}^n$ the Jacobi polynomials on the paraboloid. 
 
There is a second-order differential operator, denoted by $\fD_{\Jb}^{\g,\mu}$, that the Jacobi polynomials 
on the paraboloid satisfy, as shown in \cite{X23a}. 

\begin{prop}
Let $\g > -1$ and $\mu > -\f12$. Then $u= \Jb_{m,\kb}^n$ in \eqref{eq:Jbasis_UU} satisfies the differential 
equation 
\begin{align}\label{eq:diff-eqnUU}
 & \fD_{\Jb}^{\g,\mu} u : = \left[ t (1-t) \partial_{tt} + (1-t) \la x,\nabla_x\ra \partial_t + \frac{1}{4} (1-t)\Delta_x  \right. \\
      & \qquad   \qquad  \quad
      +\left.   \left(\mu+\tfrac{d+1}{2}\right)(1-t)\partial_t   - \frac{\g+1}{2} (2t \partial_t +  \la x,\nabla_x\ra) \right]  u 
       = - \l_{m,n} u \notag
\end{align}
where $\lambda_{m,n}$ is given by, for $0 \le m \le n$, 
$$
 \l_{m,n} = n \big( n+\mu+ \g + \tfrac{d+1}{2}\big) - m \big(n + \mu+\tfrac{\g + d}{2}\big). 
$$
\end{prop}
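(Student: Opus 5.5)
The plan is to verify the eigen-equation by direct computation after passing to the coordinates in which $\Jb_{m,\kb}^n$ separates. We set $x=\sqrt{s}\,y$ and $t=s$, with $y\in\BB^d$, as in \eqref{intUU=}. In these coordinates
$$
\Jb_{m,\kb}^n = g(s)\, s^{m/2}\,\Pb_\kb^m(y), \qquad g(s)=P_{n-m}^{(m+\mu+\f{d-1}{2},\g)}(1-2s).
$$
The first step is to express the derivatives. Write $E=\la y,\nabla_y\ra$. Then
$$
\nabla_x=s^{-1/2}\nabla_y,\qquad \la x,\nabla_x\ra=E,\qquad \Delta_x=s^{-1}\Delta_y,\qquad \partial_t=\partial_s-\tfrac{1}{2s}E .
$$
Since $E$ commutes with $\partial_s$, squaring the last identity gives
$$
\partial_t^2=\partial_s^2-\tfrac1s E\partial_s+\tfrac{1}{4s^2}(E^2+2E).
$$

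Substituting these into \eqref{eq:diff-eqnUU}, the mixed terms $\pm E\partial_s$ cancel. The terms that are purely in $y$ should assemble into
$$
\frac{1-s}{4s}\Big[\Delta_y-E^2-(2\mu+d-1)E\Big]=\frac{1-s}{4s}\,\fD_\BB^\mu ,
$$
with $\fD_\BB^\mu$ as in \eqref{eq:fD_Bd}. In the last term, $2t\partial_t+\la x,\nabla_x\ra=2s\partial_s$, because the two $E$'s cancel. The operator should therefore become
$$
\fD_\Jb^{\g,\mu}=(1-s)\Big[s\partial_s^2+\big(\mu+\tfrac{d+1}{2}\big)\partial_s\Big]-(\g+1)s\partial_s+\frac{1-s}{4s}\fD_\BB^\mu .
$$
This bookkeeping is the main place where an error could creep in, so I would double-check the coefficient of $E$ coming from the $(\mu+\frac{d+1}{2})(1-t)\partial_t$ term.

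Next, I use \eqref{eq:eigenB}, namely $\fD_\BB^\mu \Pb_\kb^m=-m(m+2\mu+d)\Pb_\kb^m$. This reduces the problem to the ODE for $h(s)=s^{m/2}g(s)$:
$$
(1-s)\Big[s h''+\big(\mu+\tfrac{d+1}{2}\big)h'-\tfrac{m(m+2\mu+d)}{4s}h\Big]-(\g+1)s h'=-\l_{m,n}h .
$$
Substituting $h=s^{m/2}g$, the singular $1/s$ terms cancel exactly: the factor $\frac{m}{2}(\frac m2-1)+\frac m2(\mu+\frac{d+1}{2})$ equals $\frac{m(m+2\mu+d)}{4}$. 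What remains is
$$
s(1-s)g''+\Big[\big(m+\mu+\tfrac{d+1}{2}\big)(1-s)-(\g+1)s\Big]g'-\tfrac{m}{2}(\g+1)g=-\l_{m,n}g .
$$

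This is the Jacobi differential equation in the variable $s=(1-u)/2$ with parameters $\a=m+\mu+\frac{d-1}{2}$ and $\b=\g$. Its polynomial solution of degree $k=n-m$ is $P_k^{(\a,\b)}(1-2s)$, with eigenvalue $k(k+\a+\b+1)$. Hence
$$
\l_{m,n}=(n-m)\big(n+\mu+\g+\tfrac{d+1}{2}\big)+\tfrac m2(\g+1).
$$
Expanding, this equals $n(n+\mu+\g+\frac{d+1}{2})-m(n+\mu+\frac{\g+d}{2})$, which is the stated value.
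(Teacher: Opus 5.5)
Your route is the natural one. Your operator identity in the coordinates $(s,y)$ is exactly the paper's decomposition \eqref{eq:fD_UU2}, since there $\partial_t+\frac1{2t}\la x,\nabla_x\ra=\partial_s$. Your coefficient of $E$ is right, and so is the final Jacobi step. The step that fails is the cancellation of the singular terms, because the identity you assert is false:
\[
\tfrac m2\big(\tfrac m2-1\big)+\tfrac m2\big(\mu+\tfrac{d+1}{2}\big)=\tfrac{m(m+2\mu+d-1)}{4},
\]
not $\frac{m(m+2\mu+d)}{4}$. For example, with $m=2$, $\mu=0$, $d=1$ the left side is $1$, not $\frac32$. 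So if you really use the eigenvalue $-m(m+2\mu+d)$ from \eqref{eq:eigenB}, a term $-\frac{m(1-s)}{4s}\,h$ survives, and $u$ is not an eigenfunction at all.

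What is happening is that two errors cancel. Display \eqref{eq:eigenB} is a misprint: the operator \eqref{eq:fD_Bd} acts on $\CV_m(\BB^d,W_\BB^\mu)$ with eigenvalue $-m(m+2\mu+d-1)$. Two quick checks confirm this:
\begin{itemize}
\item For $P(y)=y_1$ one gets $\Delta y_1-\la y,\nabla_y\ra^2 y_1-(2\mu+d-1)\la y,\nabla_y\ra y_1=-(2\mu+d)y_1$, which is $-m(m+2\mu+d-1)$ at $m=1$.
\item For $d=1$, $\mu=\frac12$ the operator must reduce to Legendre's, with eigenvalue $-n(n+1)$.
\end{itemize}
With the correct eigenvalue, the corrected identity above gives exact cancellation. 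The remaining equation
\[
s(1-s)g''+\big[(m+\mu+\tfrac{d+1}2)(1-s)-(\g+1)s\big]g'-\tfrac m2(\g+1)g=-\l_{m,n}g
\]
is then correct. So is your identification $\l_{m,n}=(n-m)(n+\mu+\g+\frac{d+1}2)+\frac m2(\g+1)$, which expands to the stated value. The proof is repaired by citing the correct ball eigenvalue and fixing that one line. As written, though, it rests on a false algebraic identity that happens to undo the misprint.
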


The operator $\fD_{\Jb}^{\g,\mu}$, however, is not a spectral operator since its eigenvalues depend on both 
$n$ and $m$. In other words, the equation \eqref{eq:diff-eqnUU} depends on this particular basis of the Jacobi 
polynomials; it does not hold for every basis of $\CV_n(\UU^{d+1}, \Wb_{\UU}^{\b,\g})$. Nevertheless, it can
be used to establish Bernstein inequalities on the paraboloid. We start with the following alternative for
Lemma \ref{lem:main}. 

\begin{lem}\label{lem:mainUU}
Let $\g > -1$ and $\mu > -\f12$. Then, for $f \in \Pi_n(\UU^{d+1}) = \Pi^{d+1}$, 
$$
  - \int_{\UU^{d+1}} \fD_{\Jb}^{\g,\mu} f(x,t) \cdot f(x,t) \Wb_{\UU}^{\g,\mu} (x,t) \d x \d t 
       \le  \lambda_{0,n} \int_{\UU^{d+1}} |f(x,t)|^2 \Wb_{\UU}^{\g,\mu} (x,t) \d x \d t.
$$
\end{lem}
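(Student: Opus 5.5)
Expand $f$ in the Jacobi basis on the paraboloid, use the eigen-equation \eqref{eq:diff-eqnUU} termwise, and then bound the eigenvalues $\l_{m,k}$ by $\l_{0,n}$.

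Since $f \in \Pi_n^{d+1}$ and $\{\Jb_{m,\kb}^k: |\kb|=m,\ 0\le m\le k\}$ is an orthogonal basis of $\CV_k(\UU^{d+1},\Wb_\UU^{\g,\mu})$ for each $k$, we write
$$
f = \sum_{k=0}^n \sum_{m=0}^k \sum_{|\kb|=m} c_{m,\kb}^k \Jb_{m,\kb}^k .
$$
This is a finite sum, so $\fD_{\Jb}^{\g,\mu}$ acts termwise, and by \eqref{eq:diff-eqnUU}
$$
\fD_{\Jb}^{\g,\mu} f = -\sum_{k=0}^n\sum_{m=0}^k \l_{m,k}\sum_{|\kb|=m} c_{m,\kb}^k \Jb_{m,\kb}^k .
$$
The family $\{\Jb_{m,\kb}^k\}$ over all $k,m,\kb$ is mutually orthogonal in $L^2(\UU^{d+1},\Wb_\UU^{\g,\mu})$. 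Different $k$ are orthogonal because they lie in different spaces $\CV_k$, and within a fixed $k$ the family is an orthogonal basis. Hence
$$
-\int_{\UU^{d+1}} \fD_{\Jb}^{\g,\mu} f\cdot f\, \Wb_\UU^{\g,\mu}\,\d x\,\d t = \sum_{k=0}^n\sum_{m=0}^k \l_{m,k}\sum_{|\kb|=m} |c_{m,\kb}^k|^2 \la \Jb_{m,\kb}^k,\Jb_{m,\kb}^k\ra_\UU^{\g,\mu}.
$$
By Parseval, $\|f\|^2$ is the same sum without the factor $\l_{m,k}$. It therefore suffices to show $\l_{m,k} \le \l_{0,n}$ for all $0\le m\le k\le n$. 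This replaces Lemma \ref{lem:main}: we use the basis-specific eigenvalues instead of a spectral operator.

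The key step is this eigenvalue comparison, which is elementary.

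First, $\l_{m,k}$ is decreasing in $m$, since $\partial_m \l_{m,k} = -(k+\mu+\frac{\g+d}{2})$. This is negative because $k\ge 0$, $d\ge1$, $\mu>-\frac12$ and $\g>-1$ give $\mu+\frac{\g+d}{2} > -\frac12-\frac12+\frac12 = -\frac12$. To be safe, we can instead argue directly: for $m\ge 1$ we have $k\ge 1$, and then $k+\mu+\frac{\g+d}{2} > 1 - 1 + \frac{d-1}{2} \cdot 0 \ge 0$. Crudely, $k\ge1$, $\mu>-\frac12$ and $\frac{\g+d}{2}>0$ make the sum positive. So $\l_{m,k}\le \l_{0,k}$.

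Second, $\l_{0,k} = k(k+\mu+\g+\frac{d+1}{2})$ is nondecreasing in $k\ge 0$. Indeed, $\mu+\g+\frac{d+1}{2} > -\frac12-1+1 = -\frac12$, so
$$
\l_{0,k+1}-\l_{0,k} = 2k+1+\mu+\g+\tfrac{d+1}{2} > 2k + \tfrac12 > 0 .
$$
Thus $\l_{m,k}\le \l_{0,k}\le \l_{0,n}$.

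The only care needed is checking these sign conditions under the parameter constraints $\g>-1$, $\mu>-\frac12$. No analytic obstacle arises, because $f$ is a polynomial and all sums are finite.
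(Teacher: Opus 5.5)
Your proposal is correct and follows the same route as the paper: expand $f$ in the basis $\Jb_{m,\kb}^k$, apply \eqref{eq:diff-eqnUU} termwise, use orthogonality and Parseval, and bound $\l_{m,k}\le\l_{0,k}\le\l_{0,n}$. You also supply two details the paper leaves implicit, namely the norm factors $\la \Jb_{m,\kb}^k,\Jb_{m,\kb}^k\ra_\UU^{\g,\mu}$ in the sums and the check of the eigenvalue monotonicity. One fix is needed: your first monotonicity argument fails at $k=0$ (harmless, since then only $m=0$ occurs), and the ``to be safe'' line should simply read $k+\mu+\frac{\g+d}{2} > 1-\frac12+0>0$ for $k\ge 1$.
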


\begin{proof}
In terms of the orthogonal basis $\Jb_{m,\kb}^n$, the Fourier expansion of $f$ is 
$$
  f(x,t) =  \sum_{m=0}^n \sum_{j=0}^m \sum_{|\kb| = j} \wh f_{j, \kb}^m \Jb_{j,\kb}^m(x,t), \qquad \wh f_{j,\kb}^m 
     = \frac{\la f, \Jb_{j,\kb}^m\ra_\UU^{\g,\mu}}{\la \Jb_{j,\kb}^m, \Jb_{j,\kb}^m\ra_\UU^{\g,\mu}}.
$$
By \eqref{eq:diff-eqnUU} and the orthogonality, 
\begin{align*}
   - \int_{\UU^{d+1}} \fD_{\Jb}^{\g,\mu} f(x,t) \cdot f(x,t) \Wb_\UU^{\g,\mu}(x,y) \d x \d t
     = \sum_{m=0}^n \sum_{j=0}^m  \l_{j, m} \sum_{|\kb| = j} \left| \wh f_{j, \kb}^m \right|^2 \\
     \le \l_{0,n} \sum_{m=0}^n \sum_{j=0}^m  \sum_{|\kb| = j} \left| \wh f_{j, \kb}^m \right|^2
     = \l_{0,n} \int_{\UU^{d+1}} |f(x,t)|^2 \Wb_{\UU}^{\g,\mu} (x,t) \d x \d t
\end{align*}
by the Parseval identity, where we have used $\l_{j,m} \le \l_{0,m} \le \l_{0,n}$. 
\end{proof}

Using this lemma, we can then establish the Bernstein inequalities on the paraboloid if the operator 
$\fD_{\Jb}^{\g,\mu}$ can be written in an appropriate self-adjoint form. The main step for the latter
is the following theorem. 

\begin{thm} 
Let $\g > -1$ and $\mu > -\f12$. The operator $ \fD_{\Jb}^{\g,\mu}$ can be rewritten as
\begin{align} \label{eq:fD_UU2}
 \fD_{\Jb}^{\g,\mu} =  \fR_\Jb^{\g,\mu}  + \frac{1-t}{4t} \fD_\BB^{\mu, (y)}
\end{align}
where $\fD_\BB^{\mu, (y)}$ denotes the spectral operator $\fD_\BB^\mu$, defined in \eqref{eq:fD_Bd}, acting on 
the variable $y = \frac{x}{\sqrt{t}} \in \BB^d$, and $\fR_\Jb^{\g,\mu} $ is defined by 
\begin{align*}
 \fR_\Jb^{\g,\mu} =  \frac{1}{t^{\f d 2} \Wb_\UU^{\g,\mu}(x,t)} \left( \partial_t + \frac{1}{2t} \la x ,\nabla_x\ra \right) 
  \left[t^{\f{d}{2} +1} \Wb_\UU^{\g+1,\mu}(x,t) \left( \partial_t + \frac{1}{2t} \la x ,\nabla_x\ra \right) \right]. \notag
 \end{align*}
In particular, $\fD_\Jb^{\g,\mu}$ is self-adjoint on $L^2(\UU^{d+1}, \Wb_{\UU}^{\g,\mu})$. Furthermore, 
\begin{align}\label{eq:int_fDU1}
  &  - \int_{\UU^{d+1}} \fR_{\Jb}^{\g,\mu} f(x,t)\cdot g(x,t) \Wb_{\UU}^{\g,\mu}(x,t) \d x \d t \\
 =  \int_{\UU^{d+1}} & t(1-t)  \left( \partial_t + \frac{1}{2t} \la x ,\nabla_x\ra \right) f(x,t)  \left( \partial_t + \frac{1}{2t} \la x ,\nabla_x\ra \right) g(x,t) \Wb_{\UU}^{\g,\mu}(x,t) \d x \d t \notag
\end{align}
and
\begin{align}\label{eq:int_fDU2}
 - & \int_{\UU^{d+1}}  \frac{1-t}{4t} \fD_{\BB}^{\mu, (y)} f(x,t)\cdot g(x,t) \Wb_{\UU}^{\g,\mu}(x,t) \d x \d t \\
  & =  \int_0^1 \frac{1-t}{4t} \left[ \int_{\BB^d} \fD_\BB^{\mu, (y)} f\left(\sqrt{t} y, t\right ) \cdot g\left(\sqrt{t} y, t\right)
   \Wb_\BB^\mu (y) \d y\right]  t^{\f{d-1}{2} + \mu}(1-t)^\g \d t. \notag
\end{align}
\end{thm}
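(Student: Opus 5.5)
The plan is to pass to the coordinates $(y,t)$ with $x=\sqrt{t}\,y$, $y\in\BB^d$, as in \eqref{intUU=}. In these coordinates both sides of \eqref{eq:fD_UU2} become explicit, and the integral identities reduce to one-variable integration by parts in $t$ together with the self-adjointness of $\fD_\BB^\mu$.

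\textbf{Chain rule.} Write $F(y,t)=f(\sqrt t\,y,t)$. Then
\[
\partial_t F=\Big(\partial_t+\tfrac{1}{2t}\la x,\nabla_x\ra\Big)f, \qquad \nabla_y F=\sqrt t\,\nabla_x f .
\]
Consequently
\[
\la y,\nabla_y\ra=\la x,\nabla_x\ra,\qquad \Delta_y=t\,\Delta_x .
\]
Equivalently, at fixed $x$ we have $\partial_t=\partial_t^{(y)}-\frac{1}{2t}\la y,\nabla_y\ra$. So the operator $\partial_t+\frac{1}{2t}\la x,\nabla_x\ra$ appearing in $\fR_\Jb^{\g,\mu}$ is just $\partial_t^{(y)}$.

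\textbf{The weight.} We have
\[
t^{\f d2}\Wb_\UU^{\g,\mu}(x,t)=t^{\f{d-1}2+\mu}(1-t)^\g\,W_\BB^\mu(y),
\]
and the factor $W_\BB^\mu(y)$ commutes with $\partial_t^{(y)}$. Hence
\[
\fR_\Jb^{\g,\mu}=t(1-t)\partial_{tt}^{(y)}+\Big[\big(\mu+\tfrac{d+1}{2}\big)(1-t)-(\g+1)t\Big]\partial_t^{(y)} .
\]

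\textbf{Matching the operators.} Next I rewrite $\fD_\Jb^{\g,\mu}$ from \eqref{eq:diff-eqnUU} in $(y,t)$ coordinates, substituting $\partial_t=\partial_t^{(y)}-\frac1{2t}\la y,\nabla_y\ra$ into the terms $t(1-t)\partial_{tt}$, $(1-t)\la x,\nabla_x\ra\partial_t$, and so on. Some care is needed with commutators such as $[\partial_t^{(y)},\frac1{2t}\la y,\nabla_y\ra]=-\frac1{2t^2}\la y,\nabla_y\ra$.

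The pure $\partial_t^{(y)}$ terms should reproduce $\fR_\Jb^{\g,\mu}$. The mixed terms $\partial_t^{(y)}\la y,\nabla_y\ra$ should cancel: $-t(1-t)\cdot\frac1t$ from $\partial_{tt}$ against $+(1-t)$ from $\la x,\nabla_x\ra\partial_t$. The $-\frac{\g+1}{2}(2t\partial_t+\la x,\nabla_x\ra)$ term becomes $-(\g+1)t\,\partial_t^{(y)}$ exactly.

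The remaining terms are $\frac{1-t}{4t}\Delta_y$, $-\frac{1-t}{4t}\la y,\nabla_y\ra^2$, and first-order terms in $\la y,\nabla_y\ra$. These should assemble into $\frac{1-t}{4t}\big(\Delta_y-\la y,\nabla_y\ra^2-(2\mu+d-1)\la y,\nabla_y\ra\big)$, which is $\frac{1-t}{4t}\fD_\BB^{\mu,(y)}$ by \eqref{eq:fD_Bd}. The coefficient checks are $-\frac{\mu}{2}-\frac{d+1}{4}+\frac14=-\frac{2\mu+d-1}{4}$, plus the contributions of $\frac{1-t}{4t}\la y,\nabla_y\ra$ coming from the commutator and from $t(1-t)\partial_{tt}$.

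This bookkeeping of first-order coefficients is the main obstacle. I would do it carefully, term by term, and cross-check it by applying both sides to $u=\Jb^n_{m,\kb}$. Both sides should then give $-\l_{m,n}u$, using the Jacobi differential equation in $t$ and \eqref{eq:eigenB} in $y$.

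\textbf{Integral identities.} By \eqref{intUU=}, the integral in \eqref{eq:int_fDU1} becomes
\[
\int_{\BB^d}W_\BB^\mu(y)\int_0^1 \partial_t\big[t^{\f{d+1}2+\mu}(1-t)^{\g+1}\partial_tF\big]\,G\,\d t\,\d y .
\]
One integration by parts in $t$ gives the right-hand side, after converting back to $(x,t)$ with $t\cdot t^{\f{d-1}2+\mu}(1-t)^\g$. The boundary terms vanish because $\f{d+1}2+\mu>0$ and $\g+1>0$.

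Identity \eqref{eq:int_fDU2} is simply the change of variables \eqref{intUU=} applied to the definition, using $t^{\f d2}\Wb_\UU^{\g,\mu}=t^{\f{d-1}2+\mu}(1-t)^\g W_\BB^\mu(y)$.

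\textbf{Self-adjointness.} The right-hand side of \eqref{eq:int_fDU1} is symmetric in $f$ and $g$. The inner integral in \eqref{eq:int_fDU2} is symmetric because $\fD_\BB^\mu$ is self-adjoint in $L^2(\BB^d,W_\BB^\mu)$, via its integral identity from Section 2. Together these show that $\fD_\Jb^{\g,\mu}$ is self-adjoint.
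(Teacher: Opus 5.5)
Your proposal is correct and is essentially the paper's proof. The paper expands $\fR_\Jb^{\g,\mu}$ in the $(x,t)$ variables using $\bigl(\partial_t+\frac{1}{2t}\la x,\nabla_x\ra\bigr)(t-\|x\|^2)^{\mu-\frac12}=\frac{\mu-\frac12}{t}(t-\|x\|^2)^{\mu-\frac12}$ and substitutes $y=x/\sqrt t$ only at the end, whereas you substitute first, which makes that weight identity automatic; the integral identities and self-adjointness are handled exactly as in the paper.

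One arithmetic fix to your first-order bookkeeping. Besides $-(\mu+\frac{d+1}{2})\frac{1-t}{2t}\la y,\nabla_y\ra$, the only other first-order term is $+\frac{1-t}{2t}\la y,\nabla_y\ra$, coming from the commutator inside $t(1-t)\partial_{tt}$; these are one source, not two. So the correct check is $-\frac{\mu}{2}-\frac{d+1}{4}+\frac12=-\frac{2\mu+d-1}{4}$, whereas your displayed $-\frac{\mu}{2}-\frac{d+1}{4}+\frac14$ equals $-\frac{2\mu+d}{4}$.
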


\begin{proof}
The main hurdle lies in recognizing the correct form. The verification comes down to heavy computation of
derivatives, tedious but not difficult, and our proof will be succinct. We start from an observation 
$$
   \left(\partial_t + \frac{1}{2t} \la x,\nabla_x\ra \right) (t-\|x\|^2)^{\mu - \frac12}
    = \frac{\mu - \f12}{t} (t-\|x\|^2)^{\mu-\f12},
$$
which follows from a quick computation. This identity is then used to take the derivatives in $\fR_\Jb^{\g,\mu}$ to
deduce, after simplification, 
\begin{align*}
 \fR_\Jb^{\g,\mu} 
  = \,& \left[ \mu+ \frac {d+1} 2 - \left(\mu+\g+ \frac {d+3} 2 \right)t \right]  \left( \partial_t + \frac{1}{2t} \la x ,\nabla_x\ra \right) \\
    & \,\, + t(1-t)  \left( \partial_t + \frac{1}{2t} \la x ,\nabla_x\ra \right)^2, 
\end{align*}
Furthermore, the second term on the right-hand side satisfies 
\begin{equation*}
   t \left( \partial_t + \frac{1}{2t} \la x ,\nabla_x\ra \right)^2 = t \partial_{tt} + \la x, \nabla_x \ra \partial_t + 
      \frac{1}{2t} \left( \frac 12 \la x, \nabla_x\ra^2 - \la x, \nabla_x\ra \right),
\end{equation*}
so that we can deduce, after rearranging terms and using the definition of $\fD_\UU^{\g,\mu}$ in \eqref{eq:diff-eqnUU}, that 
\begin{align*}
 \fR_\Jb^{\g,\mu}& =  \fD_\UU^{\g,\mu} -\frac{1-t}4 \Delta_x + \left (\mu+\frac{d+1}{2}\right )\frac{1-t}{2t} \la x, \nabla_x \ra
  +   \frac{1-t}{2t} \left( \frac 12 \la x, \nabla_x\ra^2 - \la x, \nabla_x\ra \right)\\
  & =  \fD_\UU^{\g,\mu} - \frac{1-t}{4t} \left(t \Delta_x - \la x, \nabla_x \ra^2 - (2\mu+d-1) \la x, \nabla_x \ra \right).
\end{align*}  
Now, setting $y = \frac{x}{\sqrt{n}}$, it follows that $\frac{\partial}{\partial y_i} = \sqrt{t}  \frac{\partial}{\partial x_i}$
and $\la x, \partial_x\ra = \la y, \partial_y\ra$, so that 
$$
t \Delta_x - \la x, \nabla_x \ra^2 - (2\mu+d-1) \la x, \nabla \ra = \Delta_y - \la y, \nabla_y \ra^2 - (2\mu+d-1) \la y, \nabla_y \ra
 = \fD_\BB^{\mu, (y)}.
$$
Together, these identities prove \eqref{eq:fD_UU2}. 

Let $I_\fR$ be the integral on the left-hand side of \eqref{eq:int_fDU1}. Parameterizing the integral with $x = \sqrt{t} y$
as in \eqref{intUU=}, and using the identity
\begin{equation}\label{eq:dt_UU}
   \frac{\d}{\d t} f\left(\sqrt{t} y, t\right ) = \left(\partial_t + \frac{1}{2t} \la x, \nabla_x \ra \right) f\left(\sqrt{t} y, t\right )
\end{equation}
and $\Wb_\UU^{\g,\mu}(y, 1) = 0$, we deduce via integration by parts, 
\begin{align*}
  I_{\fR}\,& = \int_{\BB^d} \int_0^1 \frac{\d} {\d t} \left[ t^{\frac{d}{2}+1} \Wb_\UU^{\g+1,\mu}\left(\sqrt{t} y, t\right) 
     \frac{\d} {\d t}f\left(\sqrt{t} y, t\right ) \right] g \left(\sqrt{t} y, t\right ) \d t \d y \\
     & = - \int_{\BB^d} \int_0^1\left[ t^{\frac{d}{2}+1} \Wb_\UU^{\g+1,\mu}\left(\sqrt{t} y, t\right) 
   \frac{\d} {\d t}  f\left(\sqrt{t} y, t\right ) \right]   \frac{\d} {\d t} g \left(\sqrt{t} y, t\right ) \d t \d y,
\end{align*} 
which is equal to the right-hand side of \eqref{eq:int_fDU1} upon using \eqref{eq:dt_UU}, and \eqref{intUU=}. 

Finally, the identity \eqref{eq:int_fDU2} is an immediate consequence of rewriting the integral via $x = \sqrt{t} y$
and using $\Wb_\UU^{\g,\mu} (\sqrt{t} y, t) = (1-t)^\g t^{\mu-\f12} W_\BB^\mu (y)$. Since $\fD_\BB^\mu$ is self-adjoint,
the self-adjointness of $\fD_\Jb^{\g,\mu}$ follows from the two identites \eqref{eq:int_fDU1} and \eqref{eq:int_fDU2}. 
\end{proof}

We are now ready to state the Bernstein inequalities for $L^2(\UU^{d+1}, \Wb_\UU^{\g,\mu})$. We denote 
the norm of this space by 
$$
  \| f \|_{\Wb_{\UU}^{\g,\mu}} : = \|f\|_{L^2(\UU^{d+1}, \Wb_\UU^{\g,\mu})}, \qquad \g >  -1, \quad \mu > - \tfrac 12.
$$
While the identity \eqref{eq:int_fDU1} is of the appropriate form, we need the self-adjoint form for $\fD_\BB^\mu$
in \eqref{eq:int_fDU2}, for which we have two choices as shown in Section 2, and state two sets of inequalities
accordingly. First, we use the decomposition of $\fD_\BB^\mu$ in \eqref{fDBd2}. 

\begin{thm} \label{thm:B-ineqUJ}
Let $d \ge 1$, $n = 0,1,2,\ldots$ and $f \in \Pi_n^{d+1}$. Then
\begin{align} \label{eq:U-ineqJ1}
& \left\| \sqrt{ t(1-t)}  \left( \partial_t + \frac{1}{2t} \la x ,\nabla_x\ra \right) f\right \|_{\Wb_{\UU}^{\g,\mu}}^2 
  +  \sum_{i=1}^d \left \| \frac{\sqrt{1-t} \sqrt{t-\|x\|^2}}{2 \sqrt{t}} \partial_{x_i} f \right \|_{\Wb_{\UU}^{\g,\mu}}^2 \\
 & \qquad\qquad \quad 
  +  \sum_{1\le i<j\le d} \left \| \frac{\sqrt{1-t}}{2 \sqrt{t}}  D_{i, j}^{(x)} f \right \|_{\Wb_{\UU}^{\g,\mu}}^2
       \le n\left(n+\g+\mu+\frac{d+1}{2}\right) \|f \|_{\Wb_{\UU}^{\g,\mu}}^2 \notag
\end{align} 
and the equality holds if and only if $f= \Jb_{0,\kb}^n$ in \eqref{eq:Jbasis_UU}. 
Furthermore, the following inequality is also sharp, 
\begin{align}\label{eq:U-ineqJ2}
\left\| \sqrt{ t(1-t)}  \left( \partial_t + \frac{1}{2t} \la x ,\nabla_x\ra \right) f\right \|_{\Wb_{\UU}^{\g,\mu}}
  \le \sqrt{n\left(n+\g+\mu+\frac{d+1}{2}\right)} \, \|f \|_{\Wb_{\UU}^{\g,\mu}}.
\end{align}
\end{thm}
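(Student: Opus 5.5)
Set $g=f$ in the preceding theorem and combine \eqref{eq:fD_UU2}, \eqref{eq:int_fDU1} and \eqref{eq:int_fDU2} with Lemma \ref{lem:mainUU}. Since $\l_{0,n}=n(n+\g+\mu+\frac{d+1}{2})$, Lemma \ref{lem:mainUU} gives
$$
 -\int_{\UU^{d+1}} \fR_\Jb^{\g,\mu} f\cdot f\, \Wb_\UU^{\g,\mu} - \int_{\UU^{d+1}} \tfrac{1-t}{4t}\fD_\BB^{\mu,(y)} f\cdot f\,\Wb_\UU^{\g,\mu} \le \l_{0,n}\|f\|^2_{\Wb_\UU^{\g,\mu}}.
$$
By \eqref{eq:int_fDU1}, the first term is exactly the first norm on the left of \eqref{eq:U-ineqJ1}.

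For the second term I would use \eqref{eq:int_fDU2}. For each fixed $t$, apply the ball identity that follows from \eqref{fDBd2} and \eqref{eq:intSS} to $y\mapsto f(\sqrt t y,t)$. This gives
$$
\sum_i\int_{\BB^d}(1-\|y\|^2)|\partial_{y_i}F|^2W_\BB^\mu\,\d y+\sum_{i<j}\int_{\BB^d}|D^{(y)}_{i,j}F|^2W_\BB^\mu\,\d y .
$$
Now convert back to $x=\sqrt t y$ using three facts:
\begin{itemize}
\item $\partial_{y_i}=\sqrt t\,\partial_{x_i}$;
\item $D_{i,j}^{(y)}=D_{i,j}^{(x)}$, since $D_{i,j}$ is homogeneous of degree $0$;
\item $1-\|y\|^2=(t-\|x\|^2)/t$.
\end{itemize}
The factor $\frac{1-t}{4t}(1-\|y\|^2)\,t$ then becomes $\frac{(1-t)(t-\|x\|^2)}{4t}$, and the weights reassemble through \eqref{intUU=} and $\Wb_\UU^{\g,\mu}(\sqrt t y,t)=(1-t)^\g t^{\mu-\f12}W_\BB^\mu(y)$. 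The result is exactly the second and third sums in \eqref{eq:U-ineqJ1}. This bookkeeping of powers of $t$ is routine but must be checked. This proves the inequality. Inequality \eqref{eq:U-ineqJ2} follows at once by dropping the two nonnegative sums.

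Equality is where the real work lies. From the proof of Lemma \ref{lem:mainUU}, equality holds iff every coefficient $\wh f^m_{j,\kb}$ with $\l_{j,m}<\l_{0,n}$ vanishes. One checks that $\l_{j,m}$ is strictly increasing in $m$ for $j=0$. Also, $\l_{j,m}<\l_{0,m}$ for $j\ge1$, since $n+\mu+\frac{\g+d}{2}>0$ when $\mu>-\f12$ and $\g>-1$. Hence equality holds iff $f$ lies in the span of the $\Jb^n_{0,\kb}$. For $m=0$ the index $\kb$ is trivial, so $f$ is a multiple of $\Jb^n_{0,0}(x,t)=P_n^{(\mu+\frac{d-1}2,\g)}(1-2t)$. (The case $n=0$ is trivial.)

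For sharpness of \eqref{eq:U-ineqJ2}, take this $f$, which depends only on $t$. Then $\nabla_x f=0$ and all $D_{i,j}f=0$, so the dropped sums vanish. Equality in \eqref{eq:U-ineqJ1} then gives equality in \eqref{eq:U-ineqJ2}. The main obstacle is therefore the correct translation in the ball-term computation, together with the strict monotonicity of $\l_{j,m}$ needed for the equality characterization.
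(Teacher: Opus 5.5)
Your proposal is correct and follows the paper's own proof. You set $g=f$ in \eqref{eq:int_fDU1} and \eqref{eq:int_fDU2}, expand the ball term via \eqref{fDBd2} using $\partial_{y_i}=\sqrt t\,\partial_{x_i}$ and $D_{i,j}^{(y)}=D_{i,j}^{(x)}$, apply Lemma \ref{lem:mainUU}, and get sharpness from $P_n^{(\mu+\frac{d-1}{2},\g)}(1-2t)$. Your explicit strict-monotonicity argument for the equality case fills in what the paper leaves implicit and correctly shows the extremal is a constant multiple of $\Jb_{0,0}^n$; there is one small index slip, since the quantity you need to be positive is $m+\mu+\frac{\g+d}{2}$ for $m\ge 1$, not $n+\mu+\frac{\g+d}{2}$.
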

 
\begin{proof}
Let $I_{\fB}$ denote the integral in the left-hand side of \eqref{eq:int_fDU2}. Using the decomposition of \eqref{fDBd2} 
of $\CD_\BB^\mu$, we obtain
\begin{align*}
 I_{\fB} & =  \int_0^1 \frac{1-t}{4t} \left[ \int_{\BB^d} \sum_{i=1}^d \partial_{y_i} f \left(\sqrt{t} y, t\right )
    \partial_{y_i} g\left(\sqrt{t} y, t\right)
   W_\BB^{\mu+1} (y) \d y\right]  t^{\f{d-1}{2} + \mu}(1-t)^\g \d t \\
  & +   \int_0^1 \frac{1-t}{4t} \left[ \int_{\BB^d} \sum_{1\le i< j \le d} D_{i,j}^{(y)} f \left(\sqrt{t} y, t\right )  
    D_{i,j}^{(y)} g\left(\sqrt{t} y, t\right)  W_\BB^{\mu} (y) \d y\right]  t^{\f{d-1}{2} + \mu}(1-t)^\g \d t. 
\end{align*} 
Since $\frac{\partial}{\partial y_i} = \sqrt{t} \frac{\partial}{\partial x_i}$ for $y = \frac{x}{\sqrt{t}}$, it follows
readily that $D_{i,j}^{(y)} =  D_{i,j}^{(x)}$, so that 
\begin{align*}
 I_{\fB} & =  \int_{\UU^{d+1}} \frac{1-t}{4t}   \sum_{i=1}^d \partial_{x_i} f(x,t) \partial_{x_i} g(x,t) 
 \Wb_\UU^{\g,\mu+1} (x,t) \d x \d t \\ 
  & +   \int_{\UU^{d+1}} \frac{1-t}{4t} \sum_{1\le i< j \le d} D_{i,j}^{(x)} f (x,t) D_{i,j}^{(x)} g(x,t) \Wb_\UU^{\g,\mu}(x,t) \d x \d t
\end{align*} 
Seting $g = f$ in this identiy and in the identity \eqref{eq:int_fDU1}, it follows from \eqref{eq:fD_UU2} that
\begin{align*}
 - \int_{\UU^{d+1}} & \fD_\Jb^{\g,\mu} f(x,t) \cdot f(x,t) \Wb_\UU^{\g,\mu} (x,t) \d x \d t  \\
  & =  \int_{\UU^{d+1}} t(1-t)  \left| \left( \partial_t + \frac{1}{2t} \la x ,\nabla_x\ra \right) f(x,t) \right|^2
   \Wb_{\UU}^{\g,\mu}(x,t) \d x \d t \\
  & +  \int_{\UU^{d+1}} \frac{1-t}{4t}   \sum_{i=1}^d \left| \partial_{x_i} f(x,t) \right|^2 
 \Wb_\UU^{\g,\mu+1} (x,t) \d x \d t \\ 
  & +   \int_{\UU^{d+1}} \frac{1-t}{4t} \sum_{1\le i< j \le d} \left| D_{i,j}^{(x)} f (x,t) \right |\Wb_\UU^{\g,\mu}(x,t) \d x \d t,
\end{align*}
from whcih the inequality \eqref{eq:U-ineqJ1} follows immediately from Lemma \ref{lem:mainUU} and it has 
\eqref{eq:U-ineqJ2} as a corollary. Both these inequalities are sharp, as can be seen by
choosing $f(x,t)= J_{0,\kb}^n(x,t) = P_n^{(\mu+ \f{d-1}{2}}(1-2 t)$. 
\end{proof}
 
It is worth mentioning that \eqref{eq:U-ineqJ1} also yields the inequality 
\begin{align}\label{eq:U-ineqJ3B}
 \sum_{i=1}^d \left \| \frac{\sqrt{1-t} \sqrt{t-\|x\|^2}}{2 \sqrt{t}} \partial_{x_i} f \right \|_{\Wb_{\UU}^{\g,\mu}}
 \le \sqrt{n\left(n+\g+\mu+\tfrac{d+1}{2}\right)} \left \|f \right \|_{\Wb_{\UU}^{\g,\mu}}.   
\end{align}
Although the extra weight functions in front of the derivative looks to be appropriate, we do not know if this inequality is sharp. 
For its analog on the unit ball, the extremal function for even $n$ is the rotationally invariant orthogonal polynomial 
$p_n(\|x\|)$ in $\CV(\BB^d, W_\BB^\mu)$. The corresponding polynomial on the paraboloid is 
$ t^{n/2} p_n\left(\frac{\|x\|}{t}\right)$, which corresponds to, however, an orthogonal polynomial of the form $\Jb_{n, \kb}^n$, for which 
$$
 - \fD_{\Jb}^{\g,\mu} \Jb_{n, \kb}^n = \l_{n,n} = \frac{\g+1}{2} n \, \Jb_{n, \kb}^n,
$$ 
where the constant in the right-hand is of order $n$ instead of $n^2$, so that it cannot be used to show that 
the inequality \eqref{eq:U-ineqJ3B} is sharp even in terms of the power of $n$.  \medskip

Next, we use the second decomposition \eqref{fDBd3} of the spectral operator $\fD_\BB^\mu$. 

\begin{thm} \label{thm:B-ineqUJ2}
Let $d \ge 1$, $n = 0,1,2,\ldots$ and $f \in \Pi_n^{d+1}$. Then
\begin{align} \label{eq:U-ineqB}
& \left\| \sqrt{ t(1-t)}  \left( \partial_t + \frac{1}{2t} \la x ,\nabla_x\ra \right) f\right \|_{\Wb_{\UU}^{\g,\mu}}^2 +
  \left\| \frac{\sqrt{1-t} \sqrt{t-\|x\|^2} }{2 \sqrt{t} \|x\|} \la x ,\nabla_x\ra f \right \|_{\Wb_{\UU}^{\g,\mu}}^2  \\
 & \qquad\qquad  
  +  \sum_{1\le i<j\le d} \left \| \frac{\sqrt{1-t}}{2 \|x\|}  D_{i, j}^{(x)} f \right \|_{\Wb_{\UU}^{\g,\mu}}^2
       \le n\left(n+\g+\mu+\frac{d+1}{2}\right) \|f \|_{\Wb_{\UU}^{\g,\mu}}^2 \notag
\end{align} 
and the equality holds if and only if $f= \Jb_{0,\kb}^n$ in \eqref{eq:Jbasis_UU}. 
\end{thm}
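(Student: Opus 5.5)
The proof follows that of Theorem \ref{thm:B-ineqUJ}, with the decomposition \eqref{fDBd3} of $\fD_\BB^\mu$ used in place of \eqref{fDBd2}. Start from the decomposition \eqref{eq:fD_UU2}, $\fD_\Jb^{\g,\mu} = \fR_\Jb^{\g,\mu} + \frac{1-t}{4t}\fD_\BB^{\mu,(y)}$. The $\fR$-part is handled exactly as before: with $g=f$, the identity \eqref{eq:int_fDU1} yields the first term on the left-hand side of \eqref{eq:U-ineqB}. It remains to rewrite the integral $I_\fB$ on the left of \eqref{eq:int_fDU2}.

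For this, apply the self-adjoint integral identity that follows from \eqref{fDBd3} to the inner integral over $\BB^d$ in the variable $y$, for each fixed $t$. This gives
\begin{align*}
-\int_{\BB^d} \fD_\BB^{\mu,(y)} f\, g\, W_\BB^\mu(y)\d y
 = \int_{\BB^d} \la y,\nabla_y\ra f\, \la y,\nabla_y\ra g\,(1-\|y\|^2)W_\BB^\mu(y)\frac{\d y}{\|y\|^2}
 + \sum_{i<j}\int_{\BB^d} D^{(y)}_{i,j} f\, D^{(y)}_{i,j} g\, W_\BB^\mu(y)\frac{\d y}{\|y\|^2},
\end{align*}
where $f,g$ are evaluated at $(\sqrt t y,t)$. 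Now change variables back to $x=\sqrt t\, y$, using three facts:
\begin{itemize}
\item $\la y,\nabla_y\ra=\la x,\nabla_x\ra$ and $D^{(y)}_{i,j}=D^{(x)}_{i,j}$;
\item $\|y\|^2=\|x\|^2/t$ and $1-\|y\|^2=(t-\|x\|^2)/t$;
\item $\Wb_\UU^{\g,\mu}(\sqrt t y,t)=(1-t)^\g t^{\mu-\f12}W_\BB^\mu(y)$, together with the Jacobian from \eqref{intUU=}.
\end{itemize}
With these, the radial term becomes
$$
\int_{\UU^{d+1}} \frac{1-t}{4t}\cdot\frac{t}{\|x\|^2}\cdot\frac{t-\|x\|^2}{t}\,|\la x,\nabla_x\ra f|^2\,\Wb_\UU^{\g,\mu}\d x\d t
=\left\|\frac{\sqrt{1-t}\sqrt{t-\|x\|^2}}{2\sqrt t\|x\|}\la x,\nabla_x\ra f\right\|^2_{\Wb_\UU^{\g,\mu}} .
$$
The angular terms become
$$
\int \frac{1-t}{4t}\cdot\frac{t}{\|x\|^2}\,|D_{i,j}^{(x)}f|^2\,\Wb_\UU^{\g,\mu}
=\left\|\frac{\sqrt{1-t}}{2\|x\|}D^{(x)}_{i,j}f\right\|^2_{\Wb_\UU^{\g,\mu}},
$$
which matches the statement exactly. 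The one point to check is the bookkeeping of the powers of $t$ here; the factor $\frac{1-t}{4t}$ combines with $t/\|x\|^2$ and produces precisely the stated weights.

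The main technical worry is justifying the integration by parts behind \eqref{fDBd3}, because of the singular factor $\|x\|^{-2}$. For polynomials this is fine, since it is the cited result of \cite{BX} applied on each slice in $t$. For $d=1$, the angular terms are absent and $\frac{1}{\|x\|^2}\la x,\nabla\ra^2$-type expressions reduce to $\partial_x^2$, so that case is harmless.

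Summing the two pieces expresses $-\int \fD_\Jb^{\g,\mu}f\cdot f\,\Wb_\UU^{\g,\mu}$ as the left-hand side of \eqref{eq:U-ineqB}. Lemma \ref{lem:mainUU} then bounds it by $\l_{0,n}\|f\|^2 = n(n+\g+\mu+\frac{d+1}{2})\|f\|^2$.

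For the equality case, go back to the proof of Lemma \ref{lem:mainUU}. Equality holds exactly when all the Fourier mass sits on indices with $\l_{j,m}=\l_{0,n}$. Since $\l_{j,m}$ is strictly increasing in $m$ and strictly decreasing in $j$ for $j\ge 1$ (check that $n+\mu+\frac{\g+d}{2}>0$), this forces $m=n$ and $j=0$, that is, $f$ is a multiple of $\Jb_{0,\mathbf 0}^n$. Conversely, for that $f$ the eigenvalue identity gives equality directly.
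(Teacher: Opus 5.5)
Your proposal is correct and follows the paper's proof. You apply the integration-by-parts identity coming from \eqref{fDBd3} on each $t$-slice of $I_\fB$, convert back via $x=\sqrt t\,y$ (your weight bookkeeping is right), add \eqref{eq:int_fDU1} with $g=f$, and invoke Lemma \ref{lem:mainUU}. Your strict-monotonicity argument for $\l_{j,m}$ also supplies the ``if and only if'' equality characterization, which the paper only asserts by exhibiting the extremal polynomial.
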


\begin{proof}
Again, let $I_{\fB}$ denote the integral in the left-hand side of \eqref{eq:int_fDU2}. Using the integration by parts
formula of $\CD_\BB^\mu$, stated below \eqref{fDBd3}, we obtain
\begin{align*}
 I_{\fB} & =  \int_0^1 \frac{1-t}{4t} \left[ \int_{\BB^d}
   \la y, \nabla_y \ra f \left(\sqrt{t} y, t\right ) \cdot \la y, \nabla_y \ra g\left(\sqrt{t} y, t\right) 
    W_\BB^{\mu+1} (y) \frac{\d y}{\|y\|^2} \right]  t^{\f{d-1}{2} + \mu}(1-t)^\g \d t \\
  & +   \int_0^1 \frac{1-t}{4t} \left[ \int_{\BB^d} \sum_{1\le i< j \le d} D_{i,j}^{(y)} f \left(\sqrt{t} y, t\right )  
    D_{i,j}^{(y)} g\left(\sqrt{t} y, t\right)  W_\BB^{\mu} (y) \frac{ \d y}{\|y\|^2} \right]  t^{\f{d-1}{2} + \mu}(1-t)^\g \d t. 
\end{align*} 
Since $\la y, \nabla_y\ra = \la x, \nabla_x\ra$ and $D_{i,j}^{(y)} =  D_{i,j}^{(x)}$ for $y = \frac{x}{\sqrt{t}}$,
we can write the integals as over $\UU^{d+1}$ to obgtain
\begin{align*}
 I_{\fB} & =  \int_{\UU^{d+1}} \frac{1-t}{4t} \la x, \nabla_x \ra f(x,t) \cdot \la x, \nabla_x \ra g(x, t) 
    \Wb_\UU^{\mu+1,\g} (x,t) \frac{\d x}{\|x\|^2} \d t \\
  & +   \int_{\UU^{d+1}} \frac{1-t}{4\|x\|^2} \sum_{1\le i< j \le d} D_{i,j}^{(x)} f (x,t) D_{i,j}^{(x)} g(x,t) \Wb_\UU^{\g,\mu}(x,t) \d x\d t,
\end{align*} 
from whcih the inequality \eqref{eq:U-ineqB} follows immediately as in the proof of \eqref{eq:U-ineqJ1}. 
\end{proof}

We note that the two inequalities, \eqref{eq:U-ineqJ1} and \eqref{eq:U-ineqB}, are comparable, but neither is 
stronger. Indeed, their first terms are equal,
the second term on the left-hand side of \eqref{eq:U-ineqJ1} dominates, by the Cauchy-Schwartz inequality, 
the second term on the left-hand side of \eqref{eq:U-ineqB}, yet the third term on the lleft-hand side 
of \eqref{eq:U-ineqJ1} is dominated, by $\|x\| \le \sqrt{t}$ on $\UU^{d+1}$, by the third term on the left-hand 
side of \eqref{eq:U-ineqB}.

\subsection{Laguree polynomials on unbounded paraboloid} 
We consider the unbounded paraboloid, with $b=\infty$, or $0 \le t < \infty$, in the definition of $\UU^{d+1}$.  
For $\mu> -\f12$,  we define a weight function $W_\UU^{\mu}$ on unbounded $\UU^{d+1}$ by
$$
    \Wb_\UU^{\mu}(x,t) :=  e^{-t} (t - \|x\|^2)^{\mu-\f12}, \qquad  (x,t) \in \UU^{d+1} 
$$
and, accordingly, the inner product on the paraboloid defined by 
$$
  \la f,g\ra_\UU^{\mu} = \bb_{\mu} \int_{\UU^{d+1}} f(x,t) g(x,t) \Wb_\UU^{\mu}(x,t) \d x\d t. 
$$
For $n=0,1,2,\ldots$, let $\CV_n(\UU^{d+1}, \Wb_\UU^\mu)$ be the space of orthogonal polynomials
of degree at most $n$. Then $\dim \CV_n(\VV^{d+1}, \Wb_\UU^{,\g,\mu}) = \binom{n+d}{n}$. 
As in the case of the Jacobi polynomials on the finite paraboloid, an orthogonal basis of this space 
can be given in terms of the Laguerre polynomials and an orthogonal basis on the unit ball. 

Let again $\{\Pb_{\kb}^m: |\kb| = m, \, \kb\in \NN_0^d\}$ be an orthogonal basis with parity of 
$\CV_m(\BB^d,W_\BB^\mu)$, for example, the basis given in \eqref{secondb}. For $0 \le m \le n$, define  
\begin{equation} \label{eq:Lbasis_UU}
 \Lb_{m,\kb}^n(x,t) = L_{n-m}^{m+\mu+\f{d-1}{2}}(1-2 t) t^{\f{m}{2}} 
    \Pb_{\kb}^m \left(\frac{x}{\sqrt{t} }\right), \quad |\kb| = m, \,\, 0\le m \le n,
\end{equation}
where $L_n^\a$ is the Laguerre polynomial of degree $n$ that is orthogonal with respect to the weight
function $t^\a e^{-t}$ on $\RR_+$ for $\a > -1$.  
Then $\{\Lb_{m,\kb}^n: |\kb | = m, \, 0 \le m  \le n, \, \kb \in \NN_0^d\}$ is an orthogonal basis of 
$\CV_n(\UU^{d+1}, \Wb_\UU^{\mu})$. 

Like the case of the Jacobi polynomials on the paraboloid, there is no spectral operator in $L^2(\UU^{d+1}, \Wb_\UU^\mu)$,
but the orthogonal basis of $\Lb_{m,\kb}^n$ satsify an equation defined by a second-order differential operator. 

\begin{prop}
Let $\mu > -\f12$. Then $\Lb_{m,\kb}^n$ in \eqref{eq:Lbasis_UU} satisfies the differential equation 
\begin{align}\label{eq:diff-eqnUU_L}
 & \fD_{\Lb}^{\mu} \, u: = \left[ t \partial_{tt} + \la x,\nabla_x\ra \partial_t + \frac{1}{4}\Delta_x  
     -  \f12 \la x \nabla_x \ra  \right]  u 
       = - \left (n-\frac m 2 \right) u 
\end{align}
for $|\kb| = m$ and $0 \le m \le n$. 
\end{prop}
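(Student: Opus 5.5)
The plan is to obtain \eqref{eq:diff-eqnUU_L} from the Jacobi case \eqref{eq:diff-eqnUU} by the standard confluent limit $\g \to \infty$ under the scaling $t \mapsto t/\g$. A direct verification on the product structure of $\Lb_{m,\kb}^n$ serves as a cross-check.

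\emph{Step 1 (scaling).} For $\g>-1$, substitute $t = s/\g$ and $x = z/\sqrt{\g}$. This maps the paraboloid $\{\|x\|^2\le t\le 1\}$ onto $\{\|z\|^2 \le s \le \g\}$, and the variable $y = x/\sqrt t = z/\sqrt s$ on the ball is unchanged. Hence the factor $t^{m/2}\Pb_\kb^m(x/\sqrt t)$ equals $\g^{-m/2} s^{m/2}\Pb_\kb^m(z/\sqrt s)$. The weight becomes $(1-s/\g)^\g(s-\|z\|^2)^{\mu-\f12}$ up to a constant, and it tends to $\e^{-s}(s-\|z\|^2)^{\mu-\f12} = \Wb_\UU^\mu(z,s)$. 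By the classical limit $P_{k}^{(\a,\g)}(1-2s/\g) \to L_k^\a(s)$, applied with $\a = m+\mu+\f{d-1}2$ and $k=n-m$, the Jacobi polynomial $\Jb_{m,\kb}^n$ converges, up to a constant factor, to the Laguerre polynomial $\Lb_{m,\kb}^n$ in the variables $(z,s)$. The convergence holds coefficientwise, so all derivatives converge locally uniformly.

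\emph{Step 2 (operator limit).} Rewrite $\fD_\Jb^{\g,\mu}$ in the variables $(z,s)$ using $\partial_t = \g\partial_s$ and $\nabla_x = \sqrt\g\,\nabla_z$, so that $\la x,\nabla_x\ra = \la z,\nabla_z\ra$. Then divide by $\g$ and let $\g\to\infty$, keeping only the terms of order $\g$:
\begin{itemize}
\item $t(1-t)\partial_{tt}$ becomes $\g\, s(1-s/\g)\partial_{ss}$;
\item $(1-t)\la x,\nabla_x\ra\partial_t$ becomes $\g(1-s/\g)\la z,\nabla_z\ra\partial_s$;
\item $\tfrac14(1-t)\Delta_x$ becomes $\tfrac{\g}{4}(1-s/\g)\Delta_z$;
\item $-\frac{\g+1}{2}\la x,\nabla_x\ra$ becomes $-\frac{\g+1}{2}\la z,\nabla_z\ra$;
\item $-\frac{\g+1}{2}\,2t\partial_t$ and $(\mu+\tfrac{d+1}2)(1-t)\partial_t$ become the $\partial_s$ terms.
\end{itemize}
The limit of $\g^{-1}\fD_\Jb^{\g,\mu}$ is read off term by term and compared with the operator in \eqref{eq:diff-eqnUU_L}. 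On the eigenvalue side,
$$
\g^{-1}\l_{m,n} = \g^{-1}\Big[n\big(n+\mu+\g+\tfrac{d+1}2\big) - m\big(n+\mu+\tfrac{\g+d}{2}\big)\Big] \longrightarrow n - \tfrac m2,
$$
which is exactly the eigenvalue claimed. Passing to the limit in $\g^{-1}\fD_\Jb^{\g,\mu}\Jb_{m,\kb}^n = -\g^{-1}\l_{m,n}\Jb_{m,\kb}^n$ then gives the equation for $\Lb_{m,\kb}^n$.

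\emph{Step 3 (direct check).} Independently, I would write $u = q(t)\,t^{m/2}\Pb_\kb^m(y)$ with $y=x/\sqrt t$. Using \eqref{eq:dt_UU}, the relation $\la x,\nabla_x\ra=\la y,\nabla_y\ra$, and the eigen-equation \eqref{eq:eigenB} for $\Pb_\kb^m$, the second-order part of the operator splits as in the proof of \eqref{eq:fD_UU2}. It becomes a radial part acting on $q(t)t^{m/2}$ plus $\frac1{4t}\fD_\BB^{\mu,(y)}$. The ball part contributes $-\frac{m(m+2\mu+d)}{4t}$. The resulting one-variable equation for $q$ should reduce to the Laguerre differential equation for $L_{n-m}^{m+\mu+\frac{d-1}2}$, giving the constant $-(n-\frac m2)$.

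The main obstacle is the bookkeeping in Step 2, namely tracking which first-order $\partial_s$ terms survive at order $\g$ so that the limit operator coincides with the one displayed in \eqref{eq:diff-eqnUU_L}. I expect this to be the delicate point, and I would settle it by the explicit computation of Step 3 on the lowest cases $n-m=0,1$ before trusting the general limit.
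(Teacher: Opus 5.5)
The step you flag as delicate is exactly where the argument breaks. It breaks against the target, not in the bookkeeping. Both first-order terms are of exact order $\g$, namely $(\mu+\frac{d+1}{2})(1-\frac{s}{\g})\,\g\,\partial_s$ and $-(\g+1)s\,\partial_s$. So the limit of $\g^{-1}\fD_\Jb^{\g,\mu}$ is
\[
s\partial_{ss}+\la z,\nabla_z\ra\partial_s+\big(\mu+\tfrac{d+1}{2}-s\big)\partial_s+\tfrac14\Delta_z-\tfrac12\la z,\nabla_z\ra ,
\]
and the term $(\mu+\frac{d+1}{2}-s)\partial_s$ has no counterpart in the operator displayed in \eqref{eq:diff-eqnUU_L}. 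That display cannot be rescued, because as printed it is false. Take $\a=\mu+\frac{d-1}{2}$ and $m=0$. Then $u=L_n^{\a}(t)$ depends on $t$ alone, and the displayed operator reduces to $t\,u''$, a polynomial of degree at most $n-1$. This cannot equal $-nu$ for $n\ge 1$; for $n=1$, $u=\a+1-t$ is simply annihilated. The check on $n-m=1$ that you proposed would have exposed this at once. As written, your final step of comparing the limit with \eqref{eq:diff-eqnUU_L} and concluding does not go through.

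What your limit actually proves is
\[
\Big[t\partial_{tt}+\la x,\nabla_x\ra\partial_t+\big(\mu+\tfrac{d+1}{2}-t\big)\partial_t+\tfrac14\Delta_x-\tfrac12\la x,\nabla_x\ra\Big]u=-\big(n-\tfrac m2\big)u.
\]
This corrected equation is also what the paper's own proof establishes. Its direct computation with $u=gH$, $g=L_{n-m}^{m+\a}(t)$, $H=t^{m/2}\Pb_\kb^m(x/\sqrt t)$ carries the term $(\a+1-t)\partial_t u$ throughout. Moreover, the corrected operator is precisely $\fR_\Lb^\mu+\frac{1}{4t}\fD_\BB^{\mu,(y)}$ as in \eqref{eq:fD_UU_B}, since $\fR_\Lb^\mu=tT^2+(\mu+\frac{d+1}{2}-t)T$ with $T=\partial_t+\frac{1}{2t}\la x,\nabla_x\ra$. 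Your limit also correctly produces the argument $t$, not $1-2t$, of the Laguerre factor in \eqref{eq:Lbasis_UU}.

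Against this corrected target your route is sound and genuinely different from the paper's. The paper verifies the equation by hand from the homogeneity relation $2t\partial_tH+\la x,\nabla_x\ra H=mH$, the Laguerre equation for $g$, and the ball equation for $H$. You instead inherit everything from the Jacobi equation \eqref{eq:diff-eqnUU} through $P_{n-m}^{(m+\a,\g)}(1-2s/\g)\to L_{n-m}^{m+\a}(s)$ and coefficientwise convergence. That also explains the eigenvalue as $\lim\g^{-1}\l_{m,n}$.

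If you keep Step 3, use the eigenvalue $-m(m+2\mu+d-1)$ of $\fD_\BB^\mu$ on $\CV_m(\BB^d,W_\BB^\mu)$; test it on $x_1$. With the value $-m(m+2\mu+d)$ printed in \eqref{eq:eigenB}, the $1/t$ terms leave a spurious $-\frac{m}{4t}u$.
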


\begin{proof}
Let $\a = \mu + \frac{d-1}{2}$. To simplify the notation, we write $u(x,t) = g(t) H(x,t)$, where 
$g(t) = L_{n-m}^{m+\a}(t)$ and $H(x,t) = t^{\f m 2}\Pb_{\kb}^m(\frac{x}{\sqrt{t}})$. As observed in \cite[(4.8) and (4.9)]{X23a},
$H$ satisifes 
\begin{equation} \label{eq:diff_H}
   2 t \frac{\partial H}{\partial t} + \la x, \nabla_x \ra H = m H \quad \hbox{and} \quad
 2 t \frac{\partial^2 H}{\partial t^2} + \la x, \nabla_x \ra \frac{\partial H}{\partial t}  = (m-2) H. 
\end{equation}
Using the first of these two identities and taking derivatives of $u$, we obtain 
\begin{align*}
  t \partial_{tt} u + \la x, \nabla_x \ra u \,& = t g'' H + \left(2t \frac{\partial H}{\partial t} + \la x ,\nabla_x \ra H\right) g' 
    +  g  \left( t \frac{\partial^2 H}{\partial t^2}+\la x, \nabla_x \ra \frac{\partial H}{\partial t}\right) \\
      & =  t g'' H +m g' H 
    +  g  \left( t \frac{\partial^2 H}{\partial t^2}+\la x, \nabla_x \ra \frac{\partial H}{\partial t}\right).
 \end{align*}
The Laguerre polynomial $L_n^\a$ satisfies $t y'' + (\a+1-t) y' = - n y$, so that $g = L_{n-m}^{m+\a}$ satisifies
the equation 
$$
      t g''(t) + (m+\a +1-t)g'(t) = - (n-m) g(t).  
$$
Consequently, using $\partial_t u= g' H + g \frac{\partial H}{\partial t}$, it follows readily that 
\begin{align*}
   t \partial_{tt} u &\, + \la x, \nabla_x \ra u + (\a + 1 -t) \partial_t u  \\
  & = -(n-m) u + g  \left( t \frac{\partial^2 H}{\partial t^2}+\la x, \nabla_x \ra \frac{\partial H}{\partial t}
     + (\a+1-t)\frac{\partial H}{\partial t} \right) \\
  & = -(n-m) u - \frac14 \Delta_x u - t g \frac{\partial H}{\partial t},     
\end{align*}
where the second identity follows from the second ideitity in \eqref{eq:diff_H} and an equation on $H$ 
deduced from the spectral equation \eqref{eq:eigenB}, as shown in the proof of \cite[Prop. 4.2]{X23a}. 
Finally, using the first idendity in \eqref{eq:diff_H}, we deduce
$$
 t g \frac{\partial H}{\partial t} = \f12 g (m H - \la x,\nabla_x H) = \frac{m}2 u - \frac 12 \la x, \nabla_x \ra g.
$$
Combining the last two identities proves \eqref{eq:diff-eqnUU_L}. 
\end{proof}

The operator $\fD_\UU^\mu$ for the Laguerre polynomials on the paraboloid is not a spectral operator
since its eigenvalues depend on both $n$ and $m$, just like the operators $\fD_\UU^{\g,\mu}$ in 
\eqref{eq:diff-eqnUU} for the Jacobi polynomials on the paraboloid. For this operator, the lemma below
is an analog of Lemma \ref{lem:mainUU} with a verbatim proof. 

\begin{lem}\label{lem:mainUU2}
Let $\mu > -\f12$. Then, for $f \in \Pi^{d+1}$, 
$$
  - \int_{\UU^{d+1}} \fD_{\Lb}^{\mu} f(x,t) \cdot f(x,t) \Wb_{\UU}^{\mu} (x,t) \d x \d t 
       \le  n  \int_{\UU^{d+1}} |f(x,t)|^2 \Wb_{\UU}^{\mu} (x,t) \d x \d t.
$$
\end{lem}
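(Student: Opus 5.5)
The plan is to mimic the proof of Lemma \ref{lem:mainUU}, with the Laguerre basis \eqref{eq:Lbasis_UU} and the eigen-equation \eqref{eq:diff-eqnUU_L} taking the places of the Jacobi basis and \eqref{eq:diff-eqnUU}. Here $f \in \Pi_n^{d+1}$, so $f$ has degree at most $n$.

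First, I expand $f$ in the orthogonal basis of $\bigoplus_{k=0}^n \CV_k(\UU^{d+1},\Wb_\UU^\mu)$:
$$
f=\sum_{k=0}^n\sum_{j=0}^k\sum_{|\kb|=j}\wh f^k_{j,\kb}\,\Lb^k_{j,\kb}.
$$
This is a finite sum of polynomials, so $\fD_\Lb^\mu$ may be applied term by term. By \eqref{eq:diff-eqnUU_L} we get $\fD_\Lb^\mu \Lb^k_{j,\kb} = -(k-\tfrac j2)\Lb^k_{j,\kb}$.

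Second, I pair $-\fD_\Lb^\mu f$ with $f$ in $L^2(\UU^{d+1},\Wb_\UU^\mu)$. The basis functions $\Lb^k_{j,\kb}$ for distinct triples $(k,j,\kb)$ are mutually orthogonal. Orthogonality across different $k$ holds because $\CV_k\perp\CV_{k'}$. Within a fixed $k$, the basis is orthogonal by construction, since the $\Pb^j_\kb$ are orthogonal on the ball and the Laguerre factors are orthogonal in $t$. Hence the left-hand side equals
$$
\sum_{k=0}^n\sum_{j=0}^k\Big(k-\frac j2\Big)\sum_{|\kb|=j}|\wh f^k_{j,\kb}|^2\,\la \Lb^k_{j,\kb},\Lb^k_{j,\kb}\ra .
$$
Third, I bound the coefficients: $0\le k-\tfrac j2\le k\le n$ for $0\le j\le k\le n$. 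The Parseval identity then turns the remaining sum into $n\|f\|^2$, which gives the lemma. The normalization constant $\bb_\mu$ in the inner product appears identically on both sides, so it is harmless.

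The only point needing care is the orthogonality and completeness used in the second step, namely that $\{\Lb^k_{j,\kb}\}$ is an orthogonal basis. The paper asserts this just before the statement. The argument also requires that the Laguerre factor is a polynomial in $t$ orthogonal against $t^{j+\mu+\frac{d-1}{2}}e^{-t}$. The displayed argument ``$1-2t$'' in \eqref{eq:Lbasis_UU} should evidently read $t$; I take that reading. Integrability on the unbounded domain is automatic, since the $e^{-t}$ weight makes all moments of polynomials finite. No integration by parts is needed here, so there are no boundary terms at infinity to control.
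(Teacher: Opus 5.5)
Your proposal is correct and is exactly the paper's argument: the paper proves this lemma by repeating verbatim the proof of Lemma~\ref{lem:mainUU} (expand in the basis $\Lb^k_{j,\kb}$, apply the eigen-equation, use orthogonality, bound the eigenvalues by $k-\frac j2\le n$, and finish with Parseval). Your reading of the Laguerre factor as $L_{k-j}^{j+\mu+\frac{d-1}{2}}(t)$ is the intended one, and there is a second typo of the same kind that affects you and the paper equally: the eigen-equation \eqref{eq:diff-eqnUU_L} you invoke holds only if the displayed $\fD_{\Lb}^{\mu}$ also contains the first-order term $\left(\mu+\frac{d+1}{2}-t\right)\partial_t$, which appears in the paper's proof of that proposition and is produced by \eqref{eq:fD_UU_B}; without it, $L_1^{\mu+\frac{d-1}{2}}(t)$ would be annihilated by the operator instead of having eigenvalue $-1$.
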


We use this lemma to establish the Bernstein inequalities on the unbounded paraboloid. As in the Jacobi 
case, we need to rewrite $\fD_{\Lb}^{\mu}$ in an appropriate self-adjoint form stated below. 

\begin{thm} 
Let $\mu > -\f12$. The operator $ \fD_{\Lb}^{\mu}$ can be rewritten as
\begin{align} \label{eq:fD_UU_B}
 \fD_{\Lb}^{\mu} =  \fR_\Lb^{\mu}  + \frac{1}{4t} \fD_\BB^{\mu, (y)},
\end{align}
where $\fD_\BB^{\mu, (y)}$ denotes the spectral operator $\fD_\BB^\mu$, defined in \eqref{eq:fD_Bd}, acting on 
the variable $y = \frac{x}{\sqrt{t}} \in \BB^d$, and $\fR_\Lb^{\mu} $ is defined by 
\begin{align*}
 \fR_\Lb^{\mu} =  \frac{1}{t^{\f d 2} \Wb_{\mu}(x,t)} \left( \partial_t + \frac{1}{2t} \la x ,\nabla_x\ra \right) 
  \left[t^{\f{d}{2} +1} \Wb_{\mu}(x,t) \left( \partial_t + \frac{1}{2t} \la x ,\nabla_x\ra \right) \right]. \notag
\end{align*}
In particular, $\fD_\Lb^{\mu}$ is self-adjoint on $L^2(\UU^{d+1}, \Wb_{\UU}^{\mu})$. Furthermore, 
\begin{align}\label{eq:int_fDU1B}
  &  - \int_{\UU^{d+1}} \fR_{\Lb}^{\mu} f(x,t)\cdot g(x,t) \Wb_{\UU}^{\mu}(x,t) \d x \d t \\
 &  =   \int_{\UU^{d+1}}  t \left( \partial_t + \frac{1}{2t} \la x ,\nabla_x\ra \right) f(x,t)  
    \left( \partial_t + \frac{1}{2t} \la x ,\nabla_x\ra \right) g(x,t) \Wb_{\UU}^{\mu}(x,t) \d x \d t \notag
\end{align}
and an analog of \eqref{eq:int_fDU2} holds with $1-t$ replaced by 1 in the nominators and $(1-t)^\g$ 
replaced by $\e^{-t}$. 
\end{thm}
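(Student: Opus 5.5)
The plan is to follow the proof of the Jacobi case essentially verbatim, replacing $(1-t)^\g$ by $\e^{-t}$ and $t(1-t)$ by $t$. Write $\partial_\ast := \partial_t + \frac{1}{2t}\la x,\nabla_x\ra$. The starting point is the observation already used before:
$$
\partial_\ast (t-\|x\|^2)^{\mu-\f12} = \frac{\mu-\f12}{t}(t-\|x\|^2)^{\mu-\f12}.
$$
Together with $\partial_\ast t^{\f d2+1} = (\f d2+1)t^{\f d2}$ and $\partial_\ast \e^{-t} = -\e^{-t}$, it gives
$$
\partial_\ast\big[t^{\f d2+1}\Wb_\UU^\mu\big] = \Big(\mu+\tfrac{d+1}{2} - t\Big) t^{\f d2}\Wb_\UU^\mu .
$$
Expanding $\fR_\Lb^\mu$ by the product rule then yields
$$
\fR_\Lb^\mu = \Big(\mu+\tfrac{d+1}{2}-t\Big)\partial_\ast + t\,\partial_\ast^2 .
$$

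For the second term I will use the identity established in the Jacobi proof,
$$
t\,\partial_\ast^2 = t\partial_{tt} + \la x,\nabla_x\ra\partial_t + \frac{1}{2t}\Big(\tfrac12\la x,\nabla_x\ra^2 - \la x,\nabla_x\ra\Big).
$$
The first-order terms combine as follows:
$$
\Big(\mu+\tfrac{d+1}{2}-t\Big)\partial_\ast = \Big(\mu+\tfrac{d+1}{2}\Big)\partial_t - t\partial_t + \frac{2\mu+d+1}{4t}\la x,\nabla_x\ra - \tfrac12\la x,\nabla_x\ra .
$$
Subtracting $\fD_\Lb^\mu$ as defined in \eqref{eq:diff-eqnUU_L}, the remainder $\fR_\Lb^\mu - \fD_\Lb^\mu$ should reduce to
$$
-\frac{1}{4t}\Big(t\Delta_x - \la x,\nabla_x\ra^2 - (2\mu+d-1)\la x,\nabla_x\ra\Big).
$$
Under $y=x/\sqrt t$ this equals $-\frac1{4t}\fD_\BB^{\mu,(y)}$, which gives \eqref{eq:fD_UU_B}.

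This bookkeeping is the main obstacle. The displayed operator \eqref{eq:diff-eqnUU_L} lists only $t\partial_{tt}+\la x,\nabla_x\ra\partial_t+\f14\Delta_x-\f12\la x,\nabla_x\ra$. The identity as written therefore forces the terms $(\mu+\frac{d+1}{2}-t)\partial_t$ to be part of $\fD_\Lb^\mu$, since they are exactly what appears in the proof of the proposition, where $(\a+1-t)\partial_t u$ is carried along. I would first check this against the Laguerre equation $tg''+(\a+1-t)g'=-ng$. The likely outcome is that the intended $\fD_\Lb^\mu$ includes $(\mu+\frac{d+1}{2})\partial_t - t\partial_t$, and the proof should state that reading explicitly. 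The $-\frac12\la x,\nabla_x\ra$ terms match on both sides, and the $\la x,\nabla_x\ra/t$ coefficients combine to $-(2\mu+d-1)/(4t)$ exactly as in the Jacobi case.

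For \eqref{eq:int_fDU1B}, parametrize by $x=\sqrt t\,y$ via \eqref{intUU=}. Use \eqref{eq:dt_UU}, which says $\frac{\d}{\d t}f(\sqrt t y,t)=\partial_\ast f$, and note that $\Wb_\UU^\mu(\sqrt t y,t)=\e^{-t}t^{\mu-\f12}W_\BB^\mu(y)$. The $t$-integral becomes
$$
\int_0^\infty \frac{\d}{\d t}\Big[t^{\f d2+1}\Wb_\UU^\mu \tfrac{\d}{\d t}f\Big]\, g\,\d t .
$$
Integrating by parts once in $t$, the boundary terms vanish. At $t=0$ this is because of the factor $t^{\f d2+\mu+\f12}$ with $\mu>-\f12$. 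At $t=\infty$ it is because of $\e^{-t}$ against polynomial growth. Reverting the variables gives \eqref{eq:int_fDU1B}. The analog of \eqref{eq:int_fDU2} is just the change of variables applied to $\frac1{4t}\fD_\BB^{\mu,(y)}$. Self-adjointness of $\fD_\Lb^\mu$ follows because both resulting bilinear forms are symmetric, the second since $\fD_\BB^\mu$ is self-adjoint on $L^2(\BB^d,W_\BB^\mu)$.
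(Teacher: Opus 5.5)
Your proposal is correct and is essentially the paper's argument, which simply says to repeat the Jacobi computation with $(1-t)^\g$ replaced by $\e^{-t}$. Your diagnosis of the bookkeeping issue is also right: \eqref{eq:diff-eqnUU_L} as displayed omits the term $(\mu+\frac{d+1}{2}-t)\partial_t$ (already for $m=0$ the basis element $L_n^{\mu+\frac{d-1}{2}}(t)$ must satisfy the full Laguerre equation), and \eqref{eq:fD_UU_B} holds exactly for the operator with that term restored. One small tightening concerns the boundary term at $t=0$, where $\frac{\d}{\d t}f(\sqrt t y,t)$ may blow up like $t^{-1/2}$ (e.g.\ $f=x_1$); the term still vanishes, but because $t^{\frac d2+\mu}\to 0$, which follows from $\mu>-\frac12$.
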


\begin{proof}
To prove \eqref{eq:fD_UU_B}, we take derivatives in $\fR_\Lb^\mu$ and follow the steps as in the proof 
of \eqref{eq:fD_UU2}, where the main ingredients have already been taken care of. Also, the proof
of \eqref{eq:int_fDU1B} is similar to that of \eqref{eq:int_fDU1}. We omit the details. 
\end{proof}

We are now ready to state the Bernstein inequalities on the unbounded paraboloid. Denote the norm
of $L^2(\UU^{d+1}, \Wb_\UU^\mu)$ by 
$$
   \|f\|_{\Wb_\UU^\mu} =  \|f\|_{L^2(\UU^{d+1}, \Wb_\UU^\mu)}. 
$$
Using the decomposition of $\fD_\BB^\mu$ in \eqref{fDBd2} gives an analog of Theorem \ref{thm:B-ineqUJ}.

\begin{thm} \label{thm:B-ineqUL}
Let $\mu > -\f12$, $d \ge 1$, $n = 0,1,2,\ldots$ and $f \in \Pi_n^{d+1}$. Then
\begin{align} \label{eq:U-ineqL1}
 \left\| \sqrt{ t} \left( \partial_t + \frac{1}{2t} \la x ,\nabla_x\ra \right) f\right \|_{\Wb_{\UU}^{\mu}}^2 
   & +  \sum_{i=1}^d \left \| \f{\sqrt{t-\|x\|^2}}{2 \sqrt{t}} \partial_{x_i} f \right \|_{\Wb_{\UU}^{\mu}}^2 \\
  & +  \sum_{1\le i<j\le d} \left \| \frac{1}{2 \sqrt{t}}  D_{i, j}^{(x)} f \right \|_{\Wb_{\UU}^{\mu}}^2
       \le n  \|f \|_{\Wb_{\UU}^{\mu}}^2 \notag
\end{align} 
and the equality holds if and only if $f= \Lb_{0,\kb}^n$ in \eqref{eq:Jbasis_UU}. 
Furthermore, the following inequality is also sharp, 
\begin{align}\label{eq:U-ineqL2}
\left\| \sqrt{ t}\left( \partial_t + \frac{1}{2t} \la x ,\nabla_x\ra \right) f\right \|_{\Wb_{\UU}^{\mu}} \le \sqrt{n} \, \|f \|_{\Wb_{\UU}^{\mu}}.
\end{align}
\end{thm}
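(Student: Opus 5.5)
The plan is to follow the proof of Theorem \ref{thm:B-ineqUJ} verbatim, replacing the Jacobi ingredients by their Laguerre analogs: the decomposition \eqref{eq:fD_UU_B}, the integral identity \eqref{eq:int_fDU1B}, the analog of \eqref{eq:int_fDU2}, and Lemma \ref{lem:mainUU2}.

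First I will compute the quadratic form of the spherical part. Using the analog of \eqref{eq:int_fDU2} with $1-t$ replaced by $1$ and $(1-t)^\g$ by $\e^{-t}$, one has
$$
-\int_{\UU^{d+1}} \frac{1}{4t}\fD_\BB^{\mu,(y)} f\cdot g\, \Wb_\UU^\mu \d x\d t = -\int_0^\infty \frac{1}{4t}\left[\int_{\BB^d}\fD_\BB^{\mu,(y)} f(\sqrt t y,t)\, g(\sqrt t y,t) W_\BB^\mu(y)\d y\right] t^{\f{d-1}{2}+\mu}\e^{-t}\d t.
$$
I then apply the integration by parts form of \eqref{fDBd2} together with \eqref{eq:intSS} to the inner integral. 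This gives $\sum_i \int \partial_{y_i}f\,\partial_{y_i}g\, W_\BB^{\mu+1}$ plus $\sum_{i<j}\int D_{i,j}^{(y)}f\, D_{i,j}^{(y)}g\, W_\BB^\mu$.

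Next I change back to the variables $(x,t)$. Here $\partial_{y_i}=\sqrt t\,\partial_{x_i}$, $D^{(y)}_{i,j}=D^{(x)}_{i,j}$, and $W_\BB^{\mu+1}(y)=t^{-\mu-\f12}(t-\|x\|^2)\,t^{\mu-\f12}\cdots$, which means the factor $W_\BB^{\mu+1}$ contributes $(t-\|x\|^2)/t$ relative to $W_\BB^\mu$. Combining this with $t\,\partial_{x_i}$ squared and the prefactor $\frac1{4t}$, the gradient term becomes $\int \frac{t-\|x\|^2}{4t}|\partial_{x_i}f|^2\Wb_\UU^\mu$. Likewise the angular term becomes $\int\frac1{4t}|D_{i,j}^{(x)}f|^2\Wb_\UU^\mu$. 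Adding \eqref{eq:int_fDU1B} with $g=f$ and using \eqref{eq:fD_UU_B}, $-\int \fD_\Lb^\mu f\cdot f\,\Wb_\UU^\mu$ equals exactly the left-hand side of \eqref{eq:U-ineqL1}. Lemma \ref{lem:mainUU2} then yields the bound $n\|f\|^2$, and \eqref{eq:U-ineqL2} follows by dropping the nonnegative second and third terms.

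The main step to watch is the equality case. From the proof of Lemma \ref{lem:mainUU2}, the quadratic form equals $\sum_{m\le n}\sum_{j\le m}(m-\f j2)\sum_{|\kb|=j}|\wh f^m_{j,\kb}|^2$, with the squared norms of the basis polynomials absorbed into the coefficients. Since $m-\f j2<n$ unless $m=n$ and $j=0$, equality forces $f$ to be a multiple of $\Lb_{0,0}^n=L_n^{\mu+\f{d-1}2}(t)$ (here $\kb=0$ because $|\kb|=0$). Conversely, for this $f$ we have $\fD_\Lb^\mu f=-nf$, so equality holds in \eqref{eq:U-ineqL1}.

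For this $f$ the $x$-derivatives vanish, so the second and third terms are zero and equality also holds in \eqref{eq:U-ineqL2}, proving sharpness. The only delicate points are the bookkeeping of the weight exponents in the change of variables, and justifying the boundary terms at $t=0$ and $t=\infty$ in \eqref{eq:int_fDU1B}. The latter hold because of the factor $t^{\f d2+1}$ and the exponential decay of $\e^{-t}$.
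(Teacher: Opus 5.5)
Your proposal is correct and follows the paper's own route. You combine the decomposition \eqref{eq:fD_UU_B} and the identity \eqref{eq:int_fDU1B} with the ball part rewritten via \eqref{fDBd2} and \eqref{eq:intSS} (using $W_\BB^{\mu+1}(y)=\frac{t-\|x\|^2}{t}W_\BB^{\mu}(y)$ and $\partial_{y_i}=\sqrt{t}\,\partial_{x_i}$), and then apply Lemma \ref{lem:mainUU2}; your observation that $m-\frac{j}{2}<n$ unless $m=n$ and $j=0$ also makes explicit the ``only if'' part of the equality case, which the paper leaves implicit.
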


And, using the decomposition of $\fD_\BB^\mu$ in \eqref{fDBd3} gives an analog of Theorem \ref{thm:B-ineqUJ2}. 

\begin{thm} \label{thm:B-ineqUL2}
Let $\mu> -\f12$, $d \ge 1$, $n = 0,1,2,\ldots$ and $f \in \Pi_n^{d+1}$. Then
\begin{align} \label{eq:U-ineqLB}
 \left\| \sqrt{t}  \left( \partial_t + \frac{1}{2t} \la x ,\nabla_x\ra \right) f\right \|_{\Wb_{\UU}^{\mu}}^2 & +
  \left\| \frac{ \sqrt{t-\|x\|^2} }{2 \sqrt{t} \|x\|} \la x ,\nabla_x\ra f \right \|_{\Wb_{\UU}^{\mu}}^2  \\
 & 
  +  \sum_{1\le i<j\le d} \left \| \frac{1}{2 \|x\|}  D_{i, j}^{(x)} f \right \|_{\Wb_{\UU}^{\mu}}^2
       \le n \|f \|_{\Wb_{\UU}^{\mu}}^2 \notag
\end{align} 
and the equality holds if and only if $f= \Lb_{0,\kb}^n$ in \eqref{eq:Jbasis_UU}. 
\end{thm}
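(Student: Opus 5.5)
The proof follows the Jacobi case (Theorem \ref{thm:B-ineqUJ2}) verbatim, with the Laguerre ingredients substituted. First I would set $g=f$ in \eqref{eq:int_fDU1B}. This gives
$$-\int_{\UU^{d+1}} \fR_\Lb^\mu f\cdot f\,\Wb_\UU^\mu \,\d x\,\d t = \left\|\sqrt t\left(\partial_t+\tfrac1{2t}\la x,\nabla_x\ra\right)f\right\|_{\Wb_\UU^\mu}^2 .$$
Next I would treat the term $\frac1{4t}\fD_\BB^{\mu,(y)}$ in \eqref{eq:fD_UU_B}. By the analog of \eqref{eq:int_fDU2} with $1-t$ replaced by $1$ and $(1-t)^\g$ replaced by $\e^{-t}$, its integral against $f$ becomes
$$\int_0^\infty \frac1{4t}\left[\int_{\BB^d}\fD_\BB^{\mu,(y)}F_t\cdot F_t\, W_\BB^\mu(y)\,\d y\right] t^{\f{d-1}2+\mu}\e^{-t}\,\d t, \qquad F_t(y)=f(\sqrt t y,t).$$

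Then I would apply the second integration-by-parts identity for $\fD_\BB^\mu$ (the one following \eqref{fDBd3}) to the inner integral. This produces two pieces, one involving $|\la y,\nabla_y\ra F_t|^2(1-\|y\|^2)W_\BB^\mu\,\|y\|^{-2}$ and one involving $\sum_{i<j}|D_{i,j}^{(y)}F_t|^2W_\BB^\mu\,\|y\|^{-2}$. I would convert back to the $(x,t)$ variables using $\la y,\nabla_y\ra=\la x,\nabla_x\ra$, $D^{(y)}_{i,j}=D^{(x)}_{i,j}$, $\|y\|^2=\|x\|^2/t$, $1-\|y\|^2=(t-\|x\|^2)/t$, and $\Wb_\UU^\mu(\sqrt t y,t)=\e^{-t}t^{\mu-\f12}W_\BB^\mu(y)$, together with \eqref{intUU=}.

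The weight bookkeeping is the only delicate point, and I would check it explicitly. In the radial term, the factors $\frac1{4t}\cdot\frac{t-\|x\|^2}{t}\cdot\frac{t}{\|x\|^2}$ combine to $\frac{t-\|x\|^2}{4t\|x\|^2}$, which is the square of the weight in the second term of \eqref{eq:U-ineqLB}. In the angular term, the factors $\frac1{4t}\cdot\frac{t}{\|x\|^2}$ combine to $\frac1{4\|x\|^2}$, matching the third term.

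Summing the three pieces identifies $-\int \fD_\Lb^\mu f\cdot f\,\Wb_\UU^\mu$ with the left-hand side of \eqref{eq:U-ineqLB}, so Lemma \ref{lem:mainUU2} gives the bound $n\|f\|^2_{\Wb_\UU^\mu}$.

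For the equality case, I would expand $f$ in the basis $\Lb^m_{j,\kb}$ as in the proof of Lemma \ref{lem:mainUU}. The eigenvalues are $m-\frac j2$, and these equal $n$ only when $m=n$ and $j=0$. Hence equality holds exactly when $f$ lies in the span of the $\Lb^n_{0,\kb}$. Since $\kb$ ranges only over $|\kb|=0$, this span consists of multiples of $L_n^{\mu+\f{d-1}2}$ evaluated at $t$. Conversely, such an $f$ gives equality, since it is an eigenfunction with eigenvalue $n$.
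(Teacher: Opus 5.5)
Your proposal is correct and follows the paper's own route, which proves this theorem by repeating the proof of Theorem \ref{thm:B-ineqUJ2} with the Laguerre ingredients: the decomposition \eqref{eq:fD_UU_B}, the identity \eqref{eq:int_fDU1B}, the analog of \eqref{eq:int_fDU2}, the integration-by-parts form of \eqref{fDBd3}, and Lemma \ref{lem:mainUU2}. Your weight bookkeeping checks out, and your equality argument is also right: $m-\frac j2=n$ forces $m=n$ and $j=0$, so $f$ must be a multiple of $L_n^{\mu+\frac{d-1}{2}}(t)$, which correctly reads past the typos in the paper (the reference to \eqref{eq:Jbasis_UU} and the argument $1-2t$ in \eqref{eq:Lbasis_UU}).
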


In both cases, the proof follows exactly as in the case of the Jacobi weight functions. 
\section{Bernstein inequalities on parabolic surfaces}
\setcounter{equation}{0}

In this section, we consider Bernstein inequalities on the parabolic surface 
$$
\UU_0^{d+1}: =\left \{(x,t): \|x\|^2 =  t, \,\, 0 \le t \le b, \,\, x \in \RR^d \right \},
$$  
which is the surface of the paraboloid $\UU^{d+1}$, where $b =1$ or $b= +\infty$. 
Let $\d \s$ be the Lebesgue measure on $\UU_0^{d+1}$. Paramezrising the surface by 
$x =\sqrt{t} \xi$ with $\xi \in \sph$, it follows readily that 
$$
  \int_{\UU_0^{d+1} } f(x,t) \d \s(x,t)  = 
    \int_0^\infty t^{\frac{d-1}{2}} \int_{\sph} f\left(\sqrt{t}\xi, t\right) \d\s_{\SS}(\xi)\d t, 
$$
where $\d_\SS$ is the Lebesgue measure on the unit sphere $\sph$. Like in the case of the paraboloid, 
we consider the bounded and unbounded parabolic surfaces separately. 

\subsection{Jacobi polynomials on bounded parabolic surface} 
Let $\UU_0^{d+1}$ be the bounded parabolic surface with $b = 1$. For $\g  > -1$, we define the Jacobie 
weight function 
$$
 \sW_{\UU_0}^\g (t) := t^{-\f12} (1-t)^\g,  \qquad 0 \le t \le 1,
$$
and the inner product on the bounded parabolic surface defined accordingly by
$$
  \la f,g\ra_{\UU_0}^\g = \int_{\UU_0^{d+1}} f(x,t) g(x,t) \sW_{\UU_0}^\g(t) \d\s(x,t).
$$
Let $\Pi_n(\UU_0^{d+1})$ be the space of polynomials of degree at most $n$ in $d+1$ variables restricted 
on the surface $\UU_0^{d+1}$, determined by replacing all occurrences of $\|x\|^2$ with $t$. For $n =0,1,2,\ldots$, 
let $\CV_n(\UU_0^{d+1}, \sW_{\UU_0}^\g)$ be the space of orthogonal polynomials of degree $n$ with respect to 
the inner product $ \la \cdot, \cdot\ra_{\b,\g}$ on the parabolic surface. Then its dimension is the same as that of
the space of spherical harmonics on $\SS^d$, 
$$
  \dim \CV_n\left(\UU_0^{d+1}, \sW_{\UU_0}^\g\right) = \binom{n+d}{n} - \binom{n+d-2}{n-2}
$$
and $\Pi_n(\UU_0^{d+1})$ is an orthogonal direct sum of $\CV_m\left(\UU_0^{d+1}, \sW_{\UU_0}^\g\right)$ for 
$0 \le m \le n$. 

An orthogonal basis of $\CV_n(\UU_0^{d+1}, \sW_{\b,\g})$ is explicitly given in terms of the Jacobi polynomials 
and spherical harmonics \cite{OX} with norms given in \cite[Prop. 3.1]{X23a}. Recall that spherical harmonics 
are restrictions of homogeneous harmonic polynomials on the unit sphere, and they are orthogonal on the sphere
with respect to the surface measure. Let $\CH_n^d$ denote the space of spherical harmonics of degree at most $n$ in 
$d$-variables. 

\begin{prop}
Let $\g > -1$. Let $\{Y_\ell^m: 1 \le \ell \le \dim \CH_m^d\}$ be an orthogonal basis of $\CH_m^d$. 
For $0 \le m \le n$, define  
\begin{equation} \label{eq:polyJ_UU0}
   \sJ_{m,\ell}^n(x,t) = P_{n-m}^{(m +\frac{d-1}{2},\g)}(1-2 t) t^{\f{m}{2}}Y_\ell^m\left(\f{x}{\sqrt{t}}\right). 
\end{equation}
Then $\{\sJ_{m,\ell}^n: 0\le m \le n, \, 1 \le \ell \le \dim \CH_m^d\}$ is an orthogonal basis of 
$\CV_n(\UU_0^{d+1}, \sW_{\UU_0}^\g)$.  
\end{prop}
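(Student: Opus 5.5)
The plan has three steps: check that each $\sJ_{m,\ell}^n$ is a polynomial of degree $n$ on $\UU_0^{d+1}$, verify orthogonality, and conclude by a dimension count.

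\emph{Polynomiality.} Write $Y_\ell^m(\xi)$ as the restriction of a homogeneous harmonic polynomial of degree $m$. Then
$$
t^{m/2} Y_\ell^m\!\left(\frac{x}{\sqrt t}\right) = Y_\ell^m(x),
$$
which is a homogeneous polynomial of degree $m$ in $x$. Hence $\sJ_{m,\ell}^n(x,t) = P_{n-m}^{(m+\frac{d-1}{2},\g)}(1-2t)\, Y_\ell^m(x)$ is a polynomial of degree $n$ in $(x,t)$. Its restriction to the surface lies in $\Pi_n(\UU_0^{d+1})$.

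\emph{Orthogonality.} Parametrize the surface by $x=\sqrt t\,\xi$, $\xi \in \sph$. I need the surface measure in these coordinates. The paper records it as $t^{\frac{d-1}{2}}\,\d\s_\SS(\xi)\,\d t$; the exact arc-length factor for $t=\|x\|^2$ carries an extra factor $\sqrt{1+4t}$. I will follow the paper's convention, that is, with respect to the measure given by the stated integral formula. Under that convention,
$$
\la f,g\ra_{\UU_0}^\g = \int_0^1 t^{\frac{d-1}{2}-\frac12}(1-t)^\g \int_{\sph} f(\sqrt t\xi,t)\,g(\sqrt t\xi,t)\,\d\s_\SS(\xi)\,\d t .
$$
For $\sJ_{m,\ell}^n$ the integrand factors as $P_{n-m}(1-2t)\, t^{m/2}\, Y_\ell^m(\xi)$. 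The inner integral over $\sph$ gives $\delta_{m,m'}\delta_{\ell,\ell'}$ times a norm constant, by orthogonality of spherical harmonics of different degrees and of the chosen basis within $\CH_m^d$. For $m=m'$, the remaining $t$-integral is
$$
\int_0^1 P_{n-m}^{(a,\g)}(1-2t)\,P_{n'-m}^{(a,\g)}(1-2t)\, t^{m+\frac{d-2}{2}}(1-t)^\g\,\d t .
$$
The key check is that the exponent of $t$ is exactly $a=m+\frac{d-1}{2}-\frac12$, matching the Jacobi parameter after substituting $s=1-2t$. This is where the weight $t^{-1/2}$ in $\sW_{\UU_0}^\g$ enters. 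I should double-check the index against the stated parameter $m+\frac{d-1}{2}$. If the half-unit shift does not match, the correct statement probably uses the exact arc-length measure or the other convention, and I would adjust which normalization of $\d\s$ is meant so that the exponent works out; this bookkeeping is the main obstacle. Once it matches, Jacobi orthogonality gives $\delta_{n,n'}$.

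\emph{Completeness by dimension count.} Each $\sJ_{m,\ell}^n$ has degree exactly $n$ and is orthogonal to all $\sJ_{m',\ell'}^{n'}$ with $n'<n$, which span polynomials of lower degree. To conclude that it is orthogonal to all of $\Pi_{n-1}(\UU_0^{d+1})$, I show that $\{\sJ_{m,\ell}^{k}: k\le n-1\}$ spans $\Pi_{n-1}(\UU_0^{d+1})$. On the surface, any polynomial reduces, via $\|x\|^2=t$ and the harmonic decomposition $p(x)=\sum_j \|x\|^{2j}h_{k-2j}(x)$, to a sum of terms $t^j h(x)$ with $h$ harmonic homogeneous. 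Hence $\Pi_n(\UU_0^{d+1}) = \operatorname{span}\{ q(t)Y(x): \deg q + \deg Y \le n\}$, and the $\sJ$'s with $k\le n$ span this space by a triangular change of basis in $q$.

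Counting, the number of $\sJ_{m,\ell}^n$ is $\sum_{m=0}^n \dim\CH_m^d$ with $\CH_m^d$ the degree-$m$ harmonics. This equals $\binom{n+d}{n}-\binom{n+d-2}{n-2}$, matching the stated dimension. Since these functions are mutually orthogonal and nonzero, they are linearly independent, so they form an orthogonal basis of $\CV_n(\UU_0^{d+1},\sW_{\UU_0}^\g)$.
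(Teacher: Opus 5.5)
Your route is the standard one: polynomiality via $t^{m/2}Y^m_\ell(x/\sqrt t)=Y^m_\ell(x)$, separation of variables with $x=\sqrt t\,\xi$, orthogonality of spherical harmonics, Jacobi orthogonality in $t$, and a spanning and dimension count. The polynomiality and completeness parts are fine. The gap is the step you flagged and then deferred.

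With the paper's measure and the weight $t^{-1/2}(1-t)^\g$, the $t$-integral carries $t^{m+\frac{d-2}{2}}(1-t)^\g$. Jacobi orthogonality therefore needs the parameter $m+\frac{d-2}{2}$, not the stated $m+\frac{d-1}{2}$, so the condition ``once it matches'' is never met. This cannot be fixed by renormalizing $\d\s$. The true surface measure of $t=\|x\|^2$ is $\frac12\sqrt{1+4t}\,t^{\frac{d-2}{2}}\,\d t\,\d\s_\SS(\xi)$, and combined with $t^{-1/2}(1-t)^\g$ it gives a non-Jacobi weight. No natural normalization produces $t^{m+\frac{d-1}{2}}$.

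In fact the statement as printed is false. For $m=0$, $\sJ^0_{0,1}$ is constant and $\sJ^1_{0,1}=\frac{d+1}{2}-\big(\frac{d+3}{2}+\g\big)t$. Then
\[
\la \sJ^1_{0,1},\sJ^0_{0,1}\ra_{\UU_0}^\g = c\int_0^1\Big(\tfrac{d+1}{2}-\big(\tfrac{d+3}{2}+\g\big)t\Big)t^{\frac{d-2}{2}}(1-t)^\g\,\d t = c\,B\big(\tfrac d2,\g+1\big)\,\frac{\g+1}{d+2\g+2}\neq 0,
\]
where $c>0$ comes from the sphere integral and $B$ is the Beta function.

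You should have resolved the mismatch by checking the parameter against the weight, not by proposing to change the measure. The correct statement uses $P_{n-m}^{(m+\frac{d-2}{2},\g)}(1-2t)$, and the rest of the paper uses exactly this parameter:
\begin{itemize}
\item The operator in \eqref{eq:J_U0_diff} has first-order coefficient $\frac d2-(\g+\frac d2+1)t$. This is the Jacobi operator for $t^{\frac{d-2}{2}}(1-t)^\g=t^{\frac{d-1}{2}}\sW_{\UU_0}^\g(t)$, the weight appearing in $\fR_\sJ^\g$.
\item Conjugating that operator by $t^{m/2}$ reproduces $\l_{m,n}$ exactly only when the Jacobi factor is $P_{n-m}^{(m+\frac{d-2}{2},\g)}(1-2t)$.
\item The Laguerre analog \eqref{eq:polyL_UU0} also uses $m+\frac{d-2}{2}$.
\end{itemize}
With this correction your argument is complete. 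The paper itself gives no proof of this proposition and defers to the literature.
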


We call polynomials in $\sJ_{m,\ell}^n$ in \eqref{eq:polyJ_UU0} the Jacobi polynomials on the parabolic surface. 
Setting $x = \sqrt{t} \xi$ with $\xi \in \sph$ and using that $Y_\ell^m$ is homogeneous, we can write
$$
  \sJ_{m,\ell}^n (x,t) = f_{m,n}(t) Y_\ell^m(\xi) \quad \hbox{with}\quad f_{m,n}(t) = P_{n-m}^{(m +\frac{d-1}{2},\g)}(1-2 t) t^{\f{m}{2}}.
$$
Using this expression, it was shown in \cite[Prop. 3.2]{X23a} that the Jacobi polynomials on the parabolic surface
satisfy a differential equation.
 
\begin{prop}\label{prp:pde}
Let $\g > -1$. Then $\sJ_{m,\ell}^n$ in \eqref{eq:polyJ_UU0} satisfies the differential equation
\begin{align}\label{eq:J_U0_diff}
 \fD_\sJ^{\g}  :=  \left[   t (1-t) \frac{\d^2}{\d t^2} +\left(\tfrac{d}{2} - (\g + \tfrac{d}2 + 1)t \right) \frac{\d}{\d t} + \frac{1-t}{4 t} \Delta_0^{(\xi)}
    \right] u = - \l_{m,n} u,
\end{align}
where $\Delta_0^{(\xi)}$ is the Laplace-Beltrami operator acting on $\xi = x/\sqrt{t} \in \sph$ and
$$
  \l_{m,n}  =  n \big( n+\g + \tfrac{d}{2}\big) - m \big(n + \tfrac{\g + d-1}{2}\big).   
$$
\end{prop}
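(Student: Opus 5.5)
The plan is a direct verification: write $u=\sJ_{m,\ell}^n$ in the separated form $u=f_{m,n}(t)Y_\ell^m(\xi)$, with $\xi = x/\sqrt{t}\in\sph$, and apply $\fD_\sJ^\g$ term by term. The operators $\frac{\d}{\d t}$ and $\frac{\d^2}{\d t^2}$ act only on $f_{m,n}$, since $\xi$ is held fixed. The Laplace--Beltrami operator acts only on $Y_\ell^m$, and because $Y_\ell^m\in\CH_m^d$,
$$
\Delta_0^{(\xi)} Y_\ell^m = -m(m+d-2)Y_\ell^m .
$$
The equation \eqref{eq:J_U0_diff} therefore reduces to the ordinary differential equation
$$
t(1-t) f'' + \left(\tfrac d2 - (\g+\tfrac d2+1)t\right) f' - \frac{(1-t)m(m+d-2)}{4t} f = -\l_{m,n} f, \qquad f=f_{m,n}.
$$

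To verify this ODE, write $f = t^{m/2} g(t)$ with $g(t) = P_{n-m}^{(a,\g)}(1-2t)$ and $a = m+\frac{d-1}{2}$. The substitution $s=1-2t$ in the Jacobi equation shows that $g$ satisfies the hypergeometric-type equation
$$
t(1-t)g'' + \left(a+1-(a+\g+2)t\right)g' = -(n-m)(n-m+a+\g+1)\,g .
$$
Next compute $f' = t^{m/2}\big(g' + \frac{m}{2t}g\big)$ and $f'' = t^{m/2}\big(g'' + \frac mt g' + \frac{m(m-2)}{4t^2}g\big)$, substitute these into the ODE, and divide by $t^{m/2}$. This leaves a combination of $g''$, $g'$ and $g$.

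The three groups of terms are then checked separately.
\begin{itemize}
\item The $g''$ and $g'$ coefficients are $t(1-t)$ and $m(1-t) + \frac d2 - (\g+\frac d2+1)t$. Since $\frac d2 + m = a+1 - \frac12\cdot 0 + \ldots$ must be compared with $a+1 = m+\frac{d+1}{2}$, these match the Jacobi operator for $g$ only up to $\frac12 g'$. So the weight $t^{-1/2}$ must enter, and the exact constant has to be tracked: the parametrization $\d\s$ carries $t^{\frac{d-1}{2}}$, and together with $t^{-1/2}$ this gives the weight $t^{\frac d2 -1}$, which is consistent with the first-order coefficient $\frac d2$.
\item The $1/t$ singular terms in the $g$ coefficient are $\frac{m(m-2)}{4t}(1-t)$, $\frac{m}{2t}\cdot\frac d2$ and $-\frac{(1-t)m(m+d-2)}{4t}$. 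Their $1/t$ parts cancel exactly: $\frac{m(m-2) + md - m(m+d-2)}{4} = 0$.
\item The remaining constant terms combine with $-(n-m)(n-m+a+\g+1)$ to give $-\l_{m,n}$, which is a routine algebraic check.
\end{itemize}

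The main obstacle is this bookkeeping of the first-order coefficient. The $t$-dependent part of $f'$ must combine correctly with the mismatch in the $g'$ term so that the Jacobi equation for $g$ absorbs everything. If a stray $\frac12 g'$ remains, I would recheck the Jacobi parameter against the orthogonality weight $t^{m+\frac{d-1}{2}}\cdot t^{-1/2}$ coming from $t^{m}$, the surface measure and $\sW_{\UU_0}^\g$. The defining parameter should be the one making the $g'$ coefficients agree, and the statement's normalization is taken as given from \cite{X23a}.
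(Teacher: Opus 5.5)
Your route matches the paper's. The paper quotes this proposition from the literature, and its own proof of the Laguerre analog (Proposition \ref{prp:pdeL}) is exactly your computation: separate $u=f_{m,n}(t)Y_\ell^m(\xi)$, use $\Delta_0Y_\ell^m=-m(m+d-2)Y_\ell^m$, write $f_{m,n}=t^{m/2}g$, and reduce to the classical equation for $g$. Your formulas for $f'$ and $f''$, and the cancellation of the $1/t$ terms, are correct. The gap is that you leave the mismatch you found unresolved, and with the parameter you fixed, $a=m+\frac{d-1}{2}$, it cannot be closed. The operator's $g'$-coefficient $m+\frac d2-(m+\g+\frac d2+1)t$ differs from $a+1-(a+\g+2)t$ by $-\frac12(1-t)$, not by a constant $\frac12$. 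The leftover $g$-coefficient is $\frac{md}{4}-\frac m2(\g+\frac d2+1)=-\frac{m(\g+1)}{2}$. Hence
\[
t^{-m/2}\,\fD_{\sJ}^{\g}u=\Big[-(n-m)\big(n+\g+\tfrac{d+1}{2}\big)-\tfrac{m(\g+1)}{2}\Big]g\,Y_\ell^m-\tfrac12(1-t)g'\,Y_\ell^m .
\]
For $n>m$ the last term is not a multiple of $g$: that would force $g=c(1-t)^{n-m}$, which is not a Jacobi polynomial. So $u$ is not an eigenfunction at all, and even the constant differs from $\l_{m,n}$ by $\frac{n-m}{2}$. Your ``routine algebraic check'' would therefore fail. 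Deferring to the statement's normalization does not help, because the printed normalization is exactly the one that fails.

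The missing step is to recognize that the parameter printed in \eqref{eq:polyJ_UU0} is a misprint and must be $a=m+\frac{d-2}{2}$. Orthogonality forces this: the $t$-weight is $t^m\cdot t^{\frac{d-1}{2}}\cdot t^{-\frac12}(1-t)^{\g}=t^{m+\frac{d-2}{2}}(1-t)^{\g}$, the very product you wrote down. It also agrees with the Laguerre analog $L_{n-m}^{m+\frac{d-2}{2}}$ in \eqref{eq:polyL_UU0}. With this $a$ the $g'$-coefficients coincide identically. Since $n-m+a+\g+1=n+\g+\frac d2$, you get
\[
-(n-m)\big(n+\g+\tfrac d2\big)-\tfrac{m(\g+1)}{2}=-\l_{m,n},
\]
which finishes the proof. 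The weight plays no role in the verification itself; the first-order coefficient $\frac d2$ of $\fD_{\sJ}^{\g}$ alone pins down $a$. You should state this correction explicitly rather than hedge.
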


We note that the derivative $\frac{\d} {\d t} f$ satisfies, by chain rule, 
\begin{equation} \label{eq:diff_t =}
\frac{\d} {\d t} f(x,t) = \frac{\d} {\d t}  f\left(\sqrt{t} \xi,t\right)  = \left(\frac{1}{2 t} \la x, \nabla_x \ra + \partial_t\right) f(x,t).  
\end{equation} 

In contrast to orthogonal polynomials on the unit ball and the conic surfaces \cite{DX, X21}, the eigenvalues
$\l_{m,n}$ in \eqref{eq:J_U0_diff} depend on both $m$ and $n$, so that $\CV_n(\UU_0^{d+1}, \sW_{\b,\g})$
is not an eigenspace of the differential operator $\fD^{\g}_{\sP}$, in contrast to the unit sphere and the conic
surfaces. Nevertheless, the operator is self-adjoint in $L^2\left(\UU_0^{d+1}, \sW_{\b,\g}\right)$. 
We use this operator to establish the Bernstein inequality based on the following lemma. 

\begin{lem}\label{lem:mainUU0}
Let $\g > -1$. Then, for $f \in \Pi_n(\UU_0^{d+1})$,  
$$
  - \int_{\UU_0^{d+1}} \fD_{\sJ}^{\g} f(x,t) \cdot f(x,t) \sW_{\UU_0}^\g(x,t) \d \s(x, t) 
       \le  \l_{0,n} \int_{\UU)_0^{d+1}} |f(x,t)|^2\sW_{\UU_0}^\g(x,t) \d \s(x, t).
$$
\end{lem}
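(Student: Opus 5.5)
The plan is to argue exactly as in the proof of Lemma \ref{lem:mainUU}, now with the orthogonal basis $\sJ_{m,\ell}^n$ of \eqref{eq:polyJ_UU0} on the parabolic surface. Since $\Pi_n(\UU_0^{d+1})$ is the orthogonal direct sum of $\CV_k(\UU_0^{d+1},\sW_{\UU_0}^\g)$ for $0\le k\le n$, and $\{\sJ_{j,\ell}^k: 0\le j\le k,\ 1\le \ell\le \dim\CH_j^d\}$ is an orthogonal basis of $\CV_k$, every $f\in\Pi_n(\UU_0^{d+1})$ has the finite expansion
$$
 f=\sum_{k=0}^n\sum_{j=0}^k\sum_{\ell} \wh f_{j,\ell}^k\, \sJ_{j,\ell}^k, \qquad
 \wh f_{j,\ell}^k=\frac{\la f,\sJ_{j,\ell}^k\ra_{\UU_0}^\g}{\la \sJ_{j,\ell}^k,\sJ_{j,\ell}^k\ra_{\UU_0}^\g}.
$$

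First, apply $\fD_\sJ^\g$ term by term. This is legitimate because the sum is finite and the operator is linear. By Proposition \ref{prp:pde}, each basis element satisfies $\fD_\sJ^\g \sJ_{j,\ell}^k=-\l_{j,k}\sJ_{j,\ell}^k$. Pairing with $f$ and using mutual orthogonality of all the $\sJ_{j,\ell}^k$ then gives
$$
 -\int_{\UU_0^{d+1}}\fD_\sJ^\g f\cdot f\,\sW_{\UU_0}^\g\,\d\s=\sum_{k=0}^n\sum_{j=0}^k\l_{j,k}\sum_\ell |\wh f_{j,\ell}^k|^2\,\la \sJ_{j,\ell}^k,\sJ_{j,\ell}^k\ra_{\UU_0}^\g .
$$
The norm factor is needed here; it is absorbed in the Parseval identity in the last step.

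The only real step is the inequality $\l_{j,k}\le \l_{0,k}\le \l_{0,n}$ for $0\le j\le k\le n$. For the first inequality, note that $\l_{0,k}-\l_{j,k}=j\bigl(k+\frac{\g+d-1}{2}\bigr)$. This is nonnegative because $k\ge j\ge 0$. The only delicate case is $k=j=0$, where the difference is $0$ anyway. When $k\ge 1$, we have $k+\frac{\g+d-1}{2}>1+\frac{-1+d-1}{2}\ge 0$, using $\g>-1$ and $d\ge1$. For the second inequality, $\l_{0,k}=k(k+\g+\frac d2)$ is nondecreasing in $k$. Indeed, its increment from $k$ to $k+1$ is $2k+1+\g+\frac d2$, which is positive since $\g>-1$ and $d\ge 1$. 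I expect no obstacle beyond checking these signs carefully in the edge cases.

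Finally, bound every $\l_{j,k}$ by $\l_{0,n}$. Each coefficient $|\wh f_{j,\ell}^k|^2\la \sJ_{j,\ell}^k,\sJ_{j,\ell}^k\ra$ is nonnegative, so this replacement is valid. The Parseval identity for the orthogonal expansion in $L^2(\UU_0^{d+1},\sW_{\UU_0}^\g)$ then identifies the remaining sum with $\int_{\UU_0^{d+1}}|f|^2\sW_{\UU_0}^\g\,\d\s$, which yields the stated inequality. As a byproduct, the same computation applied to $\la \fD_\sJ^\g f,g\ra$ shows that $\fD_\sJ^\g$ is symmetric on polynomials, consistent with the self-adjointness remarked before the lemma.
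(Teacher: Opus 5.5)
Your proposal is correct and is essentially the paper's argument: the paper proves this lemma by repeating the proof of Lemma \ref{lem:mainUU} with the basis $\sJ_{j,\ell}^k$. That means expanding $f$, applying Proposition \ref{prp:pde}, bounding $\l_{j,k}\le \l_{0,k}\le \l_{0,n}$, and invoking Parseval. Your version is slightly more careful than the paper's written proof of Lemma \ref{lem:mainUU}, since you keep the norm factors $\la \sJ_{j,\ell}^k,\sJ_{j,\ell}^k\ra$ and check both eigenvalue inequalities from the explicit formula for $\l_{m,n}$.
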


The proof of this lemma uses the Fourier orthogonal expansion of $f \in L^2(\UU_0^{d+1}, \sW_{\UU_0}^\g)$, 
and follows the same argument as the proof of Lemma \ref{lem:mainUU}. 

For establishing the Bernstein inequalities on the parabolic surface, we now need to rewrite $\fD_\sJ^\mu$ in a
a self-adjoint form. 

\begin{thm} \label{thm:fD-UU0}
For $\g > -1$, the differential oeprator $\fD_{\sJ}^\g$ satisifes
\begin{align}\label{eq:J_U0_diff2}
 \fD_{\sJ}^\g = \fR_{\sJ}^\g + \frac{1-t}{4 t} \Delta_0^{(\xi)}
\end{align}
where the operator $\fR_\sJ^\g$ is defined by
$$
 \fR_{\sJ}^\g = \frac1{t^{\f {d-1} 2}  \sW_{\UU_0}^\g(t) }\frac{\d}{\d t} \left(t^{\f {d+1}{2}}  \sW_{\UU_0}^{\g+1}(t) \frac{\d}{\d t} \right).
$$
In particular, for $f, g  \in L^2\left(\UU_0^{d+1},  \sW_{\UU_0}^\g\right)$, 
\begin{align} \label{eq:int_U01}
  - \int_{\UU_0^{d+1}} &  \fD_{\sJ}^{\mu} f(x,t)  g (x,t)   \sW_{\UU_0}^\g (t) \d \s (x,t) \\
    & =  \int_{\UU_0^{d+1}} t(1-t) \frac{\d}{\d t} f(x,t) \frac{\d}{\d t} g (x,t)  \sW_{\UU_0}^\g (t) \d \s (x,t) \notag \\ 
    & +  \sum_{1 \le i < j \le d} \int_{\UU_0^{d+1}} \frac{1-t}{4t} D_{i,j}^{(\xi)} f(x,t) D_{i,j}^{(\xi)} g (x,t) \sW_{\UU_0}^\g (t) \d \s (x,t). 
    \notag
\end{align}
\end{thm}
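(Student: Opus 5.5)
The plan is to verify the operator identity \eqref{eq:J_U0_diff2} by expanding $\fR_\sJ^\g$ directly, and then to obtain \eqref{eq:int_U01} by passing to the coordinates $x=\sqrt t\,\xi$. In these coordinates $\frac{\d}{\d t}$ acts only on $t$, with $\xi$ fixed, by \eqref{eq:diff_t =}. The identity is then a one-variable Sturm--Liouville computation in $t$ combined with \eqref{eq:intSS} on the sphere.

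For the operator identity, note that $t^{\frac{d-1}{2}}\sW_{\UU_0}^\g(t) = t^{\frac d2 -1}(1-t)^\g$ and $t^{\frac{d+1}2}\sW_{\UU_0}^{\g+1}(t) = t^{\frac d2}(1-t)^{\g+1}$. Hence
$$
\fR_\sJ^\g = \frac{1}{t^{\frac d2-1}(1-t)^\g}\frac{\d}{\d t}\left(t^{\frac d2}(1-t)^{\g+1}\frac{\d}{\d t}\right)
= t(1-t)\frac{\d^2}{\d t^2} + \left(\tfrac d2(1-t) - (\g+1)t\right)\frac{\d}{\d t}.
$$
The coefficient of the first-order term equals $\tfrac d2 - (\g+\tfrac d2+1)t$, which matches \eqref{eq:J_U0_diff}. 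Adding $\frac{1-t}{4t}\Delta_0^{(\xi)}$ then gives \eqref{eq:J_U0_diff2} immediately.

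For \eqref{eq:int_U01}, I will write the surface integral as
$$
\int_0^1 t^{\frac{d-1}2}\sW_{\UU_0}^\g(t)\int_{\sph} (\cdots)\,\d\s_\SS(\xi)\,\d t,
$$
using the parametrization of $\UU_0^{d+1}$, and treat the two parts of the operator separately.

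For the $\fR_\sJ^\g$ part, the factor $t^{\frac{d-1}2}\sW_{\UU_0}^\g$ cancels the prefactor of $\fR_\sJ^\g$, leaving $\int_{\sph}\int_0^1 \frac{\d}{\d t}\bigl(t^{\frac d2}(1-t)^{\g+1} \frac{\d f}{\d t}\bigr)\,g \,\d t\,\d\s_\SS$. I then integrate by parts in $t$ for each fixed $\xi$. The boundary terms vanish because $t^{\frac d2}$ vanishes at $t=0$ (for $d\ge1$) and $(1-t)^{\g+1}$ vanishes at $t=1$ (since $\g>-1$). The remaining integrand is $t^{\frac d2}(1-t)^{\g+1}\,\frac{\d f}{\d t}\frac{\d g}{\d t}$, and this equals $t(1-t)\cdot t^{\frac{d-1}2}\sW_{\UU_0}^\g(t)\,\frac{\d f}{\d t}\frac{\d g}{\d t}$, which is the first term on the right of \eqref{eq:int_U01}.

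For the spherical part, for each fixed $t$ I apply \eqref{eq:intSS} to $\xi\mapsto f(\sqrt t\xi,t)$ and $\xi\mapsto g(\sqrt t\xi,t)$. The factor $\frac{1-t}{4t}$ is independent of $\xi$, so it passes through the sphere integral. This yields the sum of $D_{i,j}^{(\xi)}$ terms with the minus sign absorbed, and converting back to the surface integral gives the second term of \eqref{eq:int_U01}.

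The main point requiring care is justifying the integration by parts: the boundary behaviour at $t=0$, and interpreting $f,g$ in the appropriate domain (for instance polynomials, then extending by density). Everything else is routine.
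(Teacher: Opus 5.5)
Your route is the paper's: expand $\fR_\sJ^\g$ to match the coefficients of $\fD_\sJ^\g$, integrate by parts in $t$ in the coordinates $x=\sqrt t\,\xi$, and use \eqref{eq:intSS} on each sphere. Your coefficient computation $\frac d2-(\g+\frac d2+1)t$ is correct, and the $\Delta_0^{(\xi)}$ part is fine.

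The step that fails as justified is the vanishing of the boundary term at $t=0$. You attribute it to the factor $t^{\f d2}$ ``for $d\ge 1$''. That tacitly treats $\frac{\d}{\d t}f(\sqrt t\xi,t)$ as bounded, but it equals $\frac{1}{2\sqrt t}\la \xi,\nabla_x f\ra+\partial_t f$, which is of order $t^{-1/2}$ as $t\to0^+$ (take $f=x_1$). Hence $t^{\f d2}(1-t)^{\g+1}\frac{\d f}{\d t}\,g=O(t^{\f{d-1}2})$. This vanishes for $d\ge2$. For $d=1$, however, its limit at a fixed $\xi\in\SS^0=\{\pm1\}$ is $\frac{\xi}{2}\,\partial_x f(0,0)\,g(0,0)$. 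For example, with $d=1$, $f=x$, $g=1$, one gets $\int_0^1\frac{\d}{\d t}\bigl(t^{\f12}(1-t)^{\g+1}\frac{\d f}{\d t}\bigr)g\,\d t=\int_0^1\frac{\d}{\d t}\bigl(\frac{\xi}{2}(1-t)^{\g+1}\bigr)\d t=-\frac{\xi}{2}$. On the other side, $-\int_0^1 t^{\f12}(1-t)^{\g+1}\frac{\d f}{\d t}\frac{\d g}{\d t}\,\d t=0$. So the slice-by-slice identity you assert is false when $d=1$.

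The repair is to integrate over the sphere before discarding the boundary term. The term at $t=0$ is $\lim_{t\to0^+}t^{\f d2}(1-t)^{\g+1}\int_{\sph}\frac{\d f}{\d t}\,g\,\d\s_\SS(\xi)$. For $d\ge2$ this is $O(t^{\f{d-1}2})\to0$. For $d=1$ it equals $\frac12\,g(0,0)\sum_{\xi=\pm1}\xi\,\partial_x f(0,0)=0$ by oddness in $\xi$. The paper's proof just says ``integration by parts'' and also passes over this point. With this fix, and with $f,g$ taken to be polynomials as you propose, your argument proves the theorem for all $d\ge1$.
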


\begin{proof}
The verification of \eqref{eq:J_U0_diff2} follows from a straightforward computation of taking the derivative of
$\fR_{\sJ}^\g$ and comparing with \eqref{eq:J_U0_diff}. The integral identity follows from integration by parts, since  
\begin{align*}
&  \int_{\UU_0^{d+1}}  (\fR_{\sJ}^\mu f)(x,t) g(x,t) \sW_{\UU_0}^\g (t) \d \s (x,t) \\
&   = \int_0^1 \int_{\sph} \frac{\d}{\d t} \left(t^{\f {d+1}{2}} \sW_{-\f12, \g+1}(t) \frac{\d}{\d t} \right)f\left(\sqrt{t} \xi,t\right)\cdot
    g\left(\sqrt{t} \xi,t\right) \d \s_\SS(\xi) \d t \\
& = - \int_0^1 \int_{\sph}  \left(t^{\f {d+1}{2}} \sW_{-\f12, \g+1}(t) \frac{\d}{\d t} \right)f\left(\sqrt{t} \xi,t\right)\cdot
    \frac{\d}{\d t} g\left(\sqrt{t} \xi,t\right) \d \s_\SS(\xi) \d t \\
& = - \int_{\UU_0^{d+1}} t(1-t)  \frac{\d}{\d t}f(x,t)\frac{\d}{\d t}g(x,t)  \sW_{\UU_0}^\g (t) \d \s (x,t),
\end{align*}
and the integral for $\Delta_0^{(\xi)}$ follows from \eqref{eq:intSS}. 
\end{proof}

We are now ready to state the sharp Bernstein inequalities on the parabolic surface. Let us denote the norm of
$L^2(\UU_0^{d+1}, \sW_{\UU_0}^\g)$ by 
$$
   \|f \|_{ \sW_{\UU_0}^\g} = \|f\|_{L^2(\UU_0^{d+1},  \sW_{\UU_0}^\g)}
$$

\begin{thm} \label{thm:B-ineqU0J}
Let $\g > -1$, $d \ge 1$, $n = 0,1,2,\ldots$ and $f \in \Pi_n^{d+1}$. Then
\begin{align} \label{eq:U0-ineq1}
 \left\| \sqrt{t(1-t)} \frac{\d}{\d t} f  \right \|_{\sW_{\UU_0}^\g}^2 
    +  \sum_{1\le i<j\le d} \left \| \frac{\sqrt{1-t}}{2 \sqrt{t}} D_{i, j}^{(x)} f \right \|_{ \sW_{\UU_0}^\g}^2  \le 
    n \big( n+\g + \tfrac{d}{2}\big) \|f \|_{ \sW_{\UU_0}^\g}^2 
\end{align} 
and the equality holds if and only if $f = \sJ_{0,\ell}^n$ in \eqref{eq:polyJ_UU0}. 
Furthermore, the following inequality is also sharp, 
\begin{align} \label{eq:U0-ineq2}
 \left\| \sqrt{t(1-t)} \frac{\d}{\d t} f  \right \|_{\sW_{\UU_0}^\g} \le \sqrt{n \big( n+\g + \tfrac{d}{2}\big)}  \|f \|_{ \sW_{\UU_0}^\g}.
\end{align}  
\end{thm}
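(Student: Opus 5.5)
The plan follows the pattern of Theorem \ref{thm:B-ineqUJ}, with Theorem \ref{thm:fD-UU0} supplying the self-adjoint form and Lemma \ref{lem:mainUU0} supplying the upper bound.

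First, I set $g=f$ in the integral identity \eqref{eq:int_U01}. This gives
\begin{align*}
- \int_{\UU_0^{d+1}} \fD_{\sJ}^{\g} f \cdot f \, \sW_{\UU_0}^\g \d \s
 = \left\| \sqrt{t(1-t)} \tfrac{\d}{\d t} f \right\|_{\sW_{\UU_0}^\g}^2
 + \sum_{1\le i<j\le d} \left\| \tfrac{\sqrt{1-t}}{2\sqrt{t}} D_{i,j}^{(\xi)} f \right\|_{\sW_{\UU_0}^\g}^2 .
\end{align*}

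Next, I identify $D_{i,j}^{(\xi)}$ with $D_{i,j}^{(x)}$. Writing $x=\sqrt t\,\xi$, we have $\partial_{\xi_j}=\sqrt t\,\partial_{x_j}$, so $\xi_i\partial_{\xi_j}-\xi_j\partial_{\xi_i}=x_i\partial_{x_j}-x_j\partial_{x_i}$. This is the same observation used on the paraboloid, and it converts the second term into the one appearing in \eqref{eq:U0-ineq1}.

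Then I apply Lemma \ref{lem:mainUU0}. The left-hand side above is at most $\l_{0,n}\|f\|^2_{\sW_{\UU_0}^\g}$, and $\l_{0,n}=n(n+\g+\tfrac d2)$. This gives \eqref{eq:U0-ineq1}. Dropping the nonnegative second term then yields \eqref{eq:U0-ineq2}.

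For the equality case, I expand $f=\sum_{k\le n}\sum_{j\le k}\sum_\ell \wh f^{k}_{j,\ell}\sJ^k_{j,\ell}$. As in the proof of Lemma \ref{lem:mainUU}, the deficit in the inequality equals
$$\sum (\l_{0,n}-\l_{j,k})\,|\wh f^k_{j,\ell}|^2 \,\|\sJ^k_{j,\ell}\|^2 .$$
So I need to show that $\l_{j,k}<\l_{0,n}$ whenever $(j,k)\ne(0,n)$.

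This strict inequality is the step to check carefully. Monotonicity in $k$ is clear for $n\ge1$, since $\l_{0,k}=k(k+\g+\tfrac d2)$ is increasing. For $j\ge1$ we have $\l_{0,k}-\l_{j,k}=j\big(k+\tfrac{\g+d-1}{2}\big)$. This is positive when $k\ge1$ and $\g+d-1>-2$, which holds because $\g>-1$ and $d\ge1$.

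Hence equality forces $f$ to be a multiple of $\sJ^n_{0,\ell}$ for the single value $\ell=1$, since $\dim\CH^d_0=1$. Conversely, $f=\sJ^n_{0,1}=P_n^{(\frac{d-1}{2},\g)}(1-2t)$ depends only on $t$. For this $f$ every $D_{i,j}^{(x)}f$ vanishes, and the eigen-equation \eqref{eq:J_U0_diff} gives equality in both \eqref{eq:U0-ineq1} and \eqref{eq:U0-ineq2}. This proves that \eqref{eq:U0-ineq2} is sharp as well.

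One minor point is that $\frac{\d}{\d t}$ in the statement is understood as in \eqref{eq:diff_t =}, so the statement for $f\in\Pi_n^{d+1}$ reduces to its restriction in $\Pi_n(\UU_0^{d+1})$.
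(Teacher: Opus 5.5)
Your proposal is correct and follows the paper's own argument. You set $g=f$ in \eqref{eq:int_U01}, identify $D_{i,j}^{(\xi)}$ with $D_{i,j}^{(x)}$, and bound the result by $\l_{0,n}\|f\|^2_{\sW_{\UU_0}^\g}$ via Lemma \ref{lem:mainUU0}; your check that $\l_{j,k}<\l_{0,n}$ for $(j,k)\ne(0,n)$, with the $t$-only extremal $\sJ_{0,1}^n$, supplies the equality and sharpness details the paper leaves implicit.
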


The proof is straightforward and follows from the intergal identity \eqref{eq:int_U01} with $g = f$ and
Lemma \ref{lem:mainUU0}. We note that the derivative $\frac{\d}{\d t}$ can be replaced by the expression
in the right-hand side of \eqref{eq:diff_t =}, which means, for example, that \eqref{eq:U0-ineq2} can be stated as
\begin{align} \label{eq:U0-ineq3}
\left\| \sqrt{ t (1-t)}\left( \frac{1}{2t} \la x ,\nabla_x\ra +\partial_t \right) f\right \|_{\sW_{\UU_0}^{\g}} 
   \le \sqrt{n \big( n+\g + \tfrac{d}{2}\big)} \, \|f \|_{\sW_{\UU_0}^{\g}}.
\end{align}

\subsection{Laguerre polynomials on unbounded parabolic surface} 
Let $\UU_0^{d+1}$ be the unbounded parabolic surface with $b = +\infty$.We define the Laguerre 
weight function 
$$
    \sW_{\UU_0} (t) := t^{-\f12} \e^{-t},  \qquad t \in \RR_+, 
$$
and the inner product on the unbounded parabolic surface defined accordingly by
$$
  \la f,g\ra_{\UU_0} = \int_{\UU_0^{d+1}} f(x,t) g(x,t) \sW_{\UU_0}(t) \d\s(x,t).
$$
For $n =0,1,2,\ldots$, let $\CV_n(\UU_0^{d+1}, \sW_{\UU_0})$ be the space of orthogonal polynomials of degree
$n$ with respect to the inner product $ \la \cdot, \cdot\ra_{\UU_0}$ on the parabolic surface, which has the same
dimension as its counterpart for the finite parabolic surface. An orthogonal basis of this space can be given
in terms of the Laguerre polynomials and spherical harmonics \cite{OX}, as can be easily verified. 

\begin{prop}
Let $\{Y_\ell^m: 1 \le \ell \le \dim \CH_m^d\}$ be an orthonormal basis of $\CH_m^d$. For $0 \le m \le n$, define  
\begin{equation} \label{eq:polyL_UU0}
   \sL_{m,\ell}^n(x,t) = L_{n-m}^{m +\frac{d-2}{2}}(t) t^{\f{m}{2}}Y_\ell^m\left(\f{x}{\sqrt{t}}\right). 
\end{equation}
Then $\{\sL_{m,\ell}^n: 0\le m \le n, \, 1 \le \ell \le \dim \CH_m^d\}$ is an orthogonal basis of 
$\CV_n(\UU_0^{d+1}, \sW_{\UU_0})$.  
\end{prop}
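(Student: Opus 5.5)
The plan is to reduce the statement to two facts: every $\sL_{m,\ell}^n$ lies in $\Pi_n(\UU_0^{d+1})$, and the $\sL_{m,\ell}^n$ are mutually orthogonal in the Laguerre inner product $\la\cdot,\cdot\ra_{\UU_0}$. Together with a dimension count, this shows they form an orthogonal basis of $\CV_n(\UU_0^{d+1},\sW_{\UU_0})$.

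\emph{Polynomiality.} I would use the parametrization $x=\sqrt t\,\xi$ with $\xi\in\sph$, as in the Jacobi case. Since $Y_\ell^m$ is a homogeneous polynomial of degree $m$, we have $t^{m/2}Y_\ell^m(x/\sqrt t)=Y_\ell^m(x)$, which is a polynomial of degree $m$ in $x$. Multiplying by $L_{n-m}^{m+\frac{d-2}{2}}(t)$, of degree $n-m$ in $t$, gives $\sL_{m,\ell}^n\in\Pi_n(\UU_0^{d+1})$. Here $\Pi_n(\UU_0^{d+1})$ means polynomials of total degree at most $n$ restricted to the surface, with $\|x\|^2=t$.

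\emph{Orthogonality.} By the surface integral formula stated at the beginning of Section 4, with upper limit $\infty$,
$$
\la \sL_{m,\ell}^n,\sL_{m',\ell'}^{n'}\ra_{\UU_0}=\int_0^\infty L_{n-m}^{m+\frac{d-2}{2}}(t)L_{n'-m'}^{m'+\frac{d-2}{2}}(t)\,t^{\frac{m+m'}{2}}t^{\frac{d-1}{2}}t^{-\frac12}\e^{-t}\d t\int_{\sph}Y_\ell^m Y_{\ell'}^{m'}\d\s_\SS .
$$
The spherical integral vanishes unless $m=m'$ and $\ell=\ell'$, by orthogonality of spherical harmonics. When $m=m'$, the $t$-weight is $t^{m+\frac{d-2}{2}}\e^{-t}$, which is exactly the Laguerre weight for the parameter $\a=m+\frac{d-2}{2}>-1$. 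So the integral vanishes for $n\ne n'$ by Laguerre orthogonality. In particular, each $\sL_{m,\ell}^n$ is orthogonal to all $\sL_{m',\ell'}^{n'}$ with $n'<n$.

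\emph{Spanning and dimension.} For fixed $n$, the family $\{\sL_{m,\ell}^k: k\le n\}$ is orthogonal with nonzero norms, hence linearly independent in $\Pi_n(\UU_0^{d+1})$. Its cardinality is $\sum_{k\le n}\sum_{m\le k}\dim\CH_m^d$. For $k=n$ this count is $\sum_{m\le n}\dim\CH_m^d=\binom{n+d}{n}-\binom{n+d-2}{n-2}$, matching the stated dimension of $\CV_n$. Summing over $k\le n$, the total count equals $\dim\Pi_n(\UU_0^{d+1})$, because $\Pi_n(\UU_0^{d+1})$ is isomorphic to the polynomials on $\SS^d$ of degree at most $n$. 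Alternatively, every polynomial on the surface can be written as $\sum_m t^{j}\,t^{m/2}Y(\xi)$ by the decomposition of polynomials restricted to the sphere.

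It follows that the $\sL_{m,\ell}^k$ span $\Pi_n(\UU_0^{d+1})$. Therefore those with $k=n$ span the orthogonal complement of $\Pi_{n-1}(\UU_0^{d+1})$ in $\Pi_n(\UU_0^{d+1})$, which is $\CV_n(\UU_0^{d+1},\sW_{\UU_0})$.

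The main obstacle is this last dimension and spanning bookkeeping on the surface, i.e.\ justifying that restricted polynomials of degree $n$ are exactly spanned by $t^{j}Y_\ell^m(x)$ with $2j+m\le n$. I would handle it with the standard identity $\Pi_n(\sph)=\bigoplus_{m\le n}\CH_m^d$ and the substitution $\|x\|^2=t$.
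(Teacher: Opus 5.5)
Your proposal is correct and is the verification the paper has in mind when it says the basis property ``can be easily verified'': separate variables via $x=\sqrt t\,\xi$, use orthogonality of spherical harmonics in $\xi$, then Laguerre orthogonality for the weight $t^{m+\frac{d-2}{2}}\e^{-t}$, with the dimension bookkeeping spelled out. One slip in your last paragraph: $t$ has degree one in $\Pi_n(\UU_0^{d+1})$, so the spanning set is $\{t^jY_\ell^m(x): j+m\le n\}$, not $2j+m\le n$ (the latter misses $t$ itself when $n=1$), and only the correct condition gives the count $\sum_{m\le n}(n-m+1)\dim\CH_m^d$ that matches your family.
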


We call polynomials in $\sL_{m,\ell}^n$ in \eqref{eq:polyL_UU0} the Laguerre polynomials on the parabolic surface. 
These polynomials satisfy a differential equation. 

\begin{prop}\label{prp:pdeL}
The polynomials $\sL_{m,\ell}^n$ in \eqref{eq:polyL_UU0} satisfies the differential equation
\begin{align}\label{eq:L_U0_diff}
 \fD_\sL :=  \left[   t \frac{\d^2}{\d t^2} + \left(\tfrac{d}{2} - t \right) \frac{\d}{\d t} + \frac{1}{4 t} \Delta_0^{(\xi)}
    \right] u = - \left(n - \frac m2\right) u,
\end{align}
where $\xi = \frac{x}{\sqrt{t}} \in \sph$. 
\end{prop}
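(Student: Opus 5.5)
The plan is a direct computation by separation of variables, following the proof of Proposition~\ref{prp:pde} (\cite[Prop. 3.2]{X23a}). Write $x=\sqrt{t}\,\xi$ with $\xi\in\sph$. Since $Y_\ell^m$ is homogeneous of degree $m$, we have
$$
\sL_{m,\ell}^n(x,t)=f(t)\,Y_\ell^m(\xi), \qquad f(t)=t^{\f m2}\, g(t), \qquad g(t)=L_{n-m}^{a}(t), \qquad a=m+\tfrac{d-2}{2}.
$$
In these coordinates $\frac{\d}{\d t}$ acts only on $f$, as in \eqref{eq:diff_t =}, and $\Delta_0^{(\xi)}$ acts only on $Y_\ell^m$. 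So the operator $\fD_\sL$ separates.

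Two classical facts will be used. First, spherical harmonics satisfy $\Delta_0 Y_\ell^m=-m(m+d-2)Y_\ell^m$. Second, the Laguerre polynomial satisfies
$$
t g''+(a+1-t)g'=-(n-m)g,
$$
which is the same Laguerre equation used in the proof of \eqref{eq:diff-eqnUU_L}. With these, the claim reduces to the one-variable identity
$$
t f''+\left(\tfrac d2-t\right)f'-\frac{m(m+d-2)}{4t}f=-\left(n-\tfrac m2\right)f .
$$

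To verify this identity, substitute $f=t^{m/2}g$. Then $f'=\tfrac m2 t^{\f m2-1}g+t^{\f m2}g'$, and $f''$ is obtained by differentiating once more. Group the terms by powers of $t$:
\begin{align*}
t f''+\left(\tfrac d2-t\right)f' \,=\, & t^{\f m2}\Big[t g''+\left(m+\tfrac d2-t\right)g'\Big] \\
 & +\tfrac m2\left(\tfrac m2-1+\tfrac d2\right)t^{\f m2-1}g-\tfrac m2\, t^{\f m2}g .
\end{align*}
Because $m+\tfrac d2=a+1$, the bracket equals $-(n-m)g$ by the Laguerre equation. The middle term equals $\frac{m(m+d-2)}{4t}f$, so it cancels exactly against the spherical term $\frac{1}{4t}\Delta_0^{(\xi)}$ applied to $u$. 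The remaining terms give $-(n-m)f-\tfrac m2 f=-\left(n-\tfrac m2\right)f$. Multiplying back by $Y_\ell^m(\xi)$ proves \eqref{eq:L_U0_diff}.

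The only step requiring care is checking that the parameter $a=m+\tfrac{d-2}{2}$ of the Laguerre polynomial is exactly what makes the first-order coefficient in the bracket equal to $a+1-t$, and that the exponent $\tfrac m2$ makes the $t^{-1}$ terms cancel against the eigenvalue of $\Delta_0$. Both checks are immediate from the displayed computation, so no real obstacle is expected.
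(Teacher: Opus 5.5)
Your proof is correct and takes essentially the same approach as the paper. Like the paper, you separate variables via $x=\sqrt{t}\,\xi$, reduce to the one-variable identity $t f''+(\tfrac d2-t)f'-\frac{m(m+d-2)}{4t}f=-(n-\tfrac m2)f$ using the Laguerre equation for $L_{n-m}^{m+\frac{d-2}{2}}$, and finish by multiplying by $Y_\ell^m$ and using $\Delta_0 Y_\ell^m=-m(m+d-2)Y_\ell^m$; your explicit expansion of $f=t^{m/2}g$ fills in the step the paper calls straightforward, and it checks out.
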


\begin{proof}
Since $Y_\ell^m$ is homogeneous, setting $x = \sqrt{t} \xi$ with $\xi \in \sph$ leads to 
$$
  \sL_{m,\ell}^n (x,t) = f_{m,n}(t) Y_\ell^m(\xi) \quad \hbox{with}\quad f_{m,n}(t) = L_{n-m}^{m +\a}(t) t^{\f{m}{2}},
$$
where $\a = \frac{d-2}{2}$. Since the Laguerre polynomial $L_n^\b$ satisfies $t y'' + (\b+1-t)y' = -n y$, it 
follows that $L_{n-m}^{m+\a}$ satisifies the equation $t y'' + (m+ \a +1 -t) y' = -(n-m) y$. Using this equation
and taking derivatives, it is straightforward to verify that 
\begin{align*}
 t f_{m,n}''(t)  + (\a+1 -t) f_{m,n}' - \frac{m (m+ 2 \a)}{4 t} f_{m,n} = - \left(n - \frac{m}{2}\right) f_{m,n}. 
\end{align*}
Multiplying the above equation by $Y_\ell^m(\xi)$, we obtain \eqref{eq:J_U0_diff} from $2 \a = d-2$ and the
spectral equaiton $- m(m+d-2) Y_\ell^m = \Delta_0 Y_\ell^m$ for spherical harmoics. 
\end{proof}
 
As in the previous subsection, the derivative $\frac{\d}{\d t}$ is defined as in \eqref{eq:diff_t =}. An analog of
Lemma \ref{lem:mainUU0} holds for $\fD_{\sL}$, which can be used for the Bernstein inequalities on the 
unbounded parabolic surface. We need the self-adjoint form of $\fD_{\sL}$. 

\begin{thm} 
The differential oeprator $\fD_{\sL}$ satisifes
\begin{align}\label{eq:L_U0_diff2}
 \fD_{\sL} = \fR_{\sL} + \frac{1}{4 t} \Delta_0^{(\xi)}
\end{align}
where the operator $\fR_\sL$ is defined by
$$
 \fR_{\sL}^\g = \frac1{t^{\f {d-1} 2} \sW_{\UU_0}(t) } \frac{\d}{\d t} \left(t^{\f {d+1}{2}}  \sW_{\UU_0}(t) \frac{\d}{\d t} \right).
$$
In particular, for $f, g  \in L^2\left(\UU_0^{d+1},  \sW_{\UU_0}^\g\right)$, 
\begin{align} \label{eq:int_U01B}
  - \int_{\UU_0^{d+1}} &  \fD_{\sL}^{\mu} f(x,t)  g (x,t)   \sW_{\UU_0}(t) \d \s (x,t) \\
    & =  \int_{\UU_0^{d+1}} t \frac{\d}{\d t} f(x,t) \frac{\d}{\d t} g (x,t)  \sW_{\UU_0}(t) \d \s (x,t) \notag \\ 
    & +  \sum_{1 \le i < j \le d} \int_{\UU_0^{d+1}} \frac{1}{4t} D_{i,j}^{(\xi)} f(x,t) D_{i,j}^{(\xi)} g (x,t) \sW_{\UU_0}(t) \d \s (x,t). 
    \notag
\end{align}
\end{thm}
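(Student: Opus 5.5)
Since the decomposition involves only $t$-derivatives and the angular part, I would follow the proof of Theorem \ref{thm:fD-UU0} verbatim, with the Jacobi factor $(1-t)^\g$ replaced by $\e^{-t}$.

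\emph{Step 1: the operator identity \eqref{eq:L_U0_diff2}.} Here $\sW_{\UU_0}(t) = t^{-\f12}\e^{-t}$, so
$$
t^{\f{d+1}{2}}\sW_{\UU_0}(t) = t^{\f d2}\e^{-t}, \qquad t^{\f{d-1}{2}}\sW_{\UU_0}(t) = t^{\f d2 -1}\e^{-t}.
$$
Expanding the outer derivative gives
$$
\fR_\sL = t^{1-\f d2}\e^{t}\left[ t^{\f d2}\e^{-t}\frac{\d^2}{\d t^2} + \left(\tfrac d2 t^{\f d2-1} - t^{\f d2}\right)\e^{-t}\frac{\d}{\d t}\right]
= t\frac{\d^2}{\d t^2} + \left(\tfrac d2 - t\right)\frac{\d}{\d t}.
$$
This is exactly the radial part of $\fD_\sL$ in \eqref{eq:L_U0_diff}. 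Adding $\frac{1}{4t}\Delta_0^{(\xi)}$ then gives \eqref{eq:L_U0_diff2}. Throughout, $\frac{\d}{\d t}$ is understood as the derivative along $t\mapsto(\sqrt t\xi,t)$ with $\xi$ fixed, as in \eqref{eq:diff_t =}, so it commutes with the $\xi$-variables.

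\emph{Step 2: the integral identity \eqref{eq:int_U01B}.} I would parametrize $\UU_0^{d+1}$ by $x=\sqrt t\xi$, so that
$$
\d\s(x,t) \to t^{\f{d-1}{2}}\,\d\s_\SS(\xi)\,\d t.
$$
Then $\sW_{\UU_0}(t)\,\d\s$ becomes $t^{\f{d-1}{2}}\sW_{\UU_0}(t)\,\d\s_\SS\,\d t$, and this factor exactly cancels the prefactor of $\fR_\sL$. The radial integral therefore becomes
$$
\int_{\sph}\int_0^\infty \frac{\d}{\d t}\left(t^{\f d2}\e^{-t}\frac{\d f}{\d t}\right) g \,\d t\,\d\s_\SS(\xi).
$$
Integrating by parts in $t$ gives
$$
-\int_{\sph}\int_0^\infty t^{\f d2}\e^{-t} f' g'\,\d t\,\d\s_\SS(\xi),
$$
and converting back yields $\int_{\UU_0^{d+1}} t\, f'g'\,\sW_{\UU_0}\,\d\s$. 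For the spherical term, the factor $\frac{1}{4t}$ and the weight depend only on $t$. So for each fixed $t$ I would apply \eqref{eq:intSS} on $\sph$ and integrate against $\frac1{4t}t^{\f{d-1}{2}}\sW_{\UU_0}(t)\,\d t$, which produces the second sum.

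\emph{Main obstacle: the boundary terms.} The step needing care is the vanishing of the boundary terms in the $t$-integration by parts.
\begin{itemize}
\item At $t=\infty$, $t^{\f d2}\e^{-t}\to0$ faster than any polynomial grows, so the term vanishes whenever $f,g$ have polynomial growth (e.g.\ polynomials, which is the case used in the Bernstein inequality).
\item At $t=0$ the factor $t^{\f d2}$ vanishes for $d\ge1$. What remains is to check that $\frac{\d}{\d t}f(\sqrt t\xi,t)$ is not too singular there.
\end{itemize}
For polynomial $f$, the chain rule \eqref{eq:diff_t =} gives $\frac{\d}{\d t}f = O(t^{-1/2})$ near $0$. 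Hence the boundary term is $O(t^{\f d2-\f12})$, which vanishes for $d\ge2$ and is bounded for $d=1$.

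For $d=1$ I would handle $t=0$ separately. There $\sph=\{\pm1\}$, the angular term is absent, and the cleaner route is to note that $t^{\f12}\cdot t^{-\f12}$-type terms pair into $f'(\pm\sqrt t)$ contributions, whose boundary contributions cancel after summing over $\xi=\pm1$. Failing that, I would restrict the identity to $f,g$ smooth in $(\sqrt t\xi,t)$ and pass to $L^2$ by density, as the paper implicitly does for \eqref{eq:int_U01}.
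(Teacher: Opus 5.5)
Your proposal is correct and follows the same route the paper takes: the paper just says the result comes from the same computation as Theorem~\ref{thm:fD-UU0}, namely expanding $\fR_\sL$ to $t\frac{\d^2}{\d t^2}+(\frac d2-t)\frac{\d}{\d t}$, integrating by parts in $t$ after substituting $x=\sqrt t\,\xi$, and applying \eqref{eq:intSS} to the angular part. Your boundary check at $t=0$ fills in a detail the paper omits: the term vanishes like $t^{(d-1)/2}$ for $d\ge2$, and for $d=1$ it cancels because $t^{1/2}\frac{\d}{\d t}f(\pm\sqrt t,t)\to\pm\frac12\partial_x f(0,0)$; the density fallback is unnecessary, and the identity only makes sense for differentiable $f,g$ such as polynomials, not for arbitrary $L^2$ functions.
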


This is proved from a straightforward calculation as that for Theorem \ref{thm:fD-UU0}. We omit the details as well as the
proof of the Bernstein inequalities below. Denote the norm of $L^2(\UU_0^{d+1}, \sW_{\UU_0})$ by 
$$
   \|f \|_{ \sW_{\UU_0}} = \|f\|_{L^2(\UU_0^{d+1},  \sW_{\UU_0})}.
$$

\begin{thm} \label{thm:B-ineqU0L}
Let $d \ge 1$, $n = 0,1,2,\ldots$ and $f \in \Pi_n^{d+1}$. Then
\begin{align} \label{eq:U0-ineqB}
 \left\| \sqrt{t} \frac{\d}{\d t} f  \right \|_{\sW_{\UU_0}} ^2 
    +  \sum_{1\le i<j\le d} \left \| \frac{1}{2 \sqrt{t}} D_{i, j}^{(x)} f \right \|_{ \sW_{\UU_0}}^2  
        \le n  \|f \|_{ \sW_{\UU_0}}^2 
\end{align} 
and the equality holds if and only if $f = \sJ_{0,\ell}^n$ in \eqref{eq:polyJ_UU0}. 
Furthermore, the following inequality is also sharp, 
\begin{align} \label{eq:U0-ineqB2}
 \left\| \sqrt{t} \frac{\d}{\d t} f  \right \|_{\sW_{\UU_0}} 
   =\left\| \sqrt{t}\left( \frac{1}{2t} \la x ,\nabla_x\ra +\partial_t \right) f\right \|_{\sW_{\UU_0}}  \le \sqrt{n}  \| f \|_{\sW_{\UU_0}}.
\end{align}  
\end{thm}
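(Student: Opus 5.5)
The plan follows the proof of Theorem \ref{thm:B-ineqU0J}: combine an integral identity, obtained from \eqref{eq:int_U01B} with $g=f$, with the Laguerre analog of Lemma \ref{lem:mainUU0}.

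First I establish that analog. Expand $f \in \Pi_n(\UU_0^{d+1})$ in the orthogonal basis $\sL_{m,\ell}^k$, $0\le m\le k\le n$. By Proposition \ref{prp:pdeL}, $-\fD_\sL \sL_{m,\ell}^k = (k-\frac m2)\sL_{m,\ell}^k$. Orthogonality then gives
$$
 -\int_{\UU_0^{d+1}} \fD_\sL f \cdot f\, \sW_{\UU_0}\, \d\s = \sum_{k=0}^n\sum_{m=0}^k \Big(k-\frac m2\Big)\sum_{\ell} |\wh f_{m,\ell}^k|^2 \,\|\sL_{m,\ell}^k\|^2_{\sW_{\UU_0}} .
$$
Since $k-\frac m2\le k\le n$, Parseval bounds the right-hand side by $n\|f\|_{\sW_{\UU_0}}^2$. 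Equality holds exactly when all coefficients with $(m,k)\ne(0,n)$ vanish. Because the eigenvalue $n$ is attained only at $m=0,k=n$, this means $f$ is a multiple of $\sL_{0,1}^n = L_n^{\frac{d-2}{2}}(t)$. The equality cases in the statement should therefore read $\sL_{0,\ell}^n$ rather than $\sJ_{0,\ell}^n$.

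Second, I apply \eqref{eq:int_U01B} with $g=f$. Its validity rests on \eqref{eq:L_U0_diff2}. To check that decomposition, expand
$$
\frac{1}{t^{\frac{d-1}2}t^{-\frac12}e^{-t}}\frac{\d}{\d t}\Big(t^{\frac d2}e^{-t}\frac{\d}{\d t}\Big) = t\frac{\d^2}{\d t^2}+\Big(\frac d2-t\Big)\frac{\d}{\d t},
$$
which matches \eqref{eq:L_U0_diff}. Note that $\fR_\sL$ must carry the weight $\sW_{\UU_0}$ in both places for this to work.

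Third, I justify the integration by parts, working in the coordinates $x=\sqrt t\,\xi$. The boundary term $t^{\frac d2}e^{-t}\frac{\d f}{\d t}\,g$ vanishes at $t=0$ because $d\ge1$ and $f,g$ are polynomials in $(\sqrt t\,\xi,t)$, so $\frac{\d f}{\d t}=O(t^{-1/2})$ near $t=0$. It vanishes at $\infty$ because of $e^{-t}$. The spherical term is handled by \eqref{eq:intSS} applied to $\Delta_0^{(\xi)}$. Then I use $D_{i,j}^{(\xi)}=D_{i,j}^{(x)}$, since the operators $D_{i,j}$ are invariant under scaling $x=\sqrt t\,\xi$. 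Together these turn the left side of \eqref{eq:U0-ineqB} into $-\int \fD_\sL f\cdot f\,\sW_{\UU_0}\,\d\s$, and the first step bounds this by $n\|f\|^2_{\sW_{\UU_0}}$.

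Finally, \eqref{eq:U0-ineqB2} follows by dropping the nonnegative $D_{i,j}$ terms; its first equality is just \eqref{eq:diff_t =}. For sharpness I take $f=L_n^{\frac{d-2}{2}}(t)$. This $f$ depends only on $t$, so every $D_{i,j}$ term is zero, and \eqref{eq:U0-ineqB} holds with equality; hence \eqref{eq:U0-ineqB2} is also an equality. The main obstacle is not computational. It lies in the boundary behavior at $t=0$ and in identifying the correct equality cases, since the printed statement's reference to $\sJ$ is a typo for $\sL$.
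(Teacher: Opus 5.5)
Your approach is the one the paper intends. The paper omits the details and refers back to the bounded case: take the identity \eqref{eq:int_U01B} with $g=f$ and combine it with the Laguerre analogue of Lemma~\ref{lem:mainUU0}. Your eigenvalue bookkeeping is correct. So are the equality case $f=c\,L_n^{\frac{d-2}{2}}(t)$ and your correction of $\sJ_{0,\ell}^n$ to $\sL_{0,\ell}^n$.

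\textbf{The gap: the boundary term at $t=0$ when $d=1$.} This is exactly the point you flag as the main obstacle, and your argument there is false as written. The bound $\frac{\d f}{\d t}=O(t^{-1/2})$ only gives $t^{\frac d2}e^{-t}\frac{\d f}{\d t}\,g=O(t^{\frac{d-1}{2}})$. That tends to $0$ for $d\ge 2$, but not for $d=1$. Concretely, for $d=1$ and fixed $\xi\in\SS^0=\{-1,1\}$,
\[
t^{1/2}\frac{\d}{\d t}f(\sqrt t\,\xi,t)=\frac{\xi}{2}\partial_x f(\sqrt t\,\xi,t)+t^{1/2}\partial_t f(\sqrt t\,\xi,t)\to\frac{\xi}{2}\partial_x f(0,0).
\]
So the boundary term tends to $\frac{\xi}{2}\partial_x f(0,0)\,g(0,0)$, which is generally nonzero for each fixed $\xi$. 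Integrating by parts in $t$ pointwise in $\xi$ therefore leaves an extra term. The phrase ``because $d\ge1$'' does not justify dropping it.

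\textbf{The repair.} Integrate over the sphere before discarding the boundary term. For every $d\ge1$,
\[
t^{\frac d2}e^{-t}\frac{\d f}{\d t}\,g=\tfrac12\,t^{\frac{d-1}{2}}\la \xi,\nabla_x f(0,0)\ra\, g(0,0)+O(t^{\frac d2}),\qquad t\to 0^+ .
\]
The leading term is odd in $\xi$, so its integral over $\sph$ vanishes; for $d=1$ the two values $\xi=\pm1$ cancel. Hence the spherically integrated boundary term tends to $0$, and \eqref{eq:int_U01B} holds for all $d\ge 1$.

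A quick check of the $d=1$ case: the inner product becomes $2\int_{\RR} f(x,x^2)e^{-x^2}\d x$, and $t=0$ is the interior point $x=0$. On $p(x)=f(x,x^2)$ the operator $\fD_\sL$ acts as $\frac14(p''-2xp')$. So the identity is ordinary integration by parts for the Hermite weight on $\RR$. With this fix, the rest of your argument goes through unchanged.
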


The equation in \eqref{eq:U0-ineqB2} follows from \eqref{eq:diff_t =} as in the bounded parabolic surface.


\end{document}